\crefname{hypothesis}{Hypothesis}{Hypotheses}
\title{Risk sensitive optimal stopping}
\author{Damian Jelito\thanks{Institute of Mathematics, Jagiellonian University, Cracow, Poland,
  (\email{damian.jelito@im.uj.edu.pl}, \email{marcin.pitera@im.uj.edu.pl}); second author acknowledges research support by NCN grant 2016/23/B/ST1/00479.}
\and Marcin Pitera\footnotemark[1]
\and  {\L}ukasz Stettner\thanks{Institute of Mathematics, Polish Academy of Sciences, Warsaw, Poland,
  (\email{l.stettner@impan.pl});  research supported by NCN grant 2016/23/B/ST1/00479.}}
\DeclareMathOperator{\E}{\mathbb{E}}  
\newcommand{\vep}{\varepsilon}
\def\cF{\mathcal{F}}
\def\cT{\mathcal{T}}
\def\bF{\mathbb{F}}
\def\bP{\mathbb{P}}
\def\bE{\mathbb{E}}
\def\bR{\mathbb{R}}
\def\bT{\mathbb{T}}
\def\bN{\mathbb{N}}
\def\namedlabel#1#2{\begingroup
    #2%
    \def\@currentlabel{#2}%
    \phantomsection\label{#1}\endgroup
}
\begin{document}

\maketitle

\begin{abstract}
In this paper we consider discrete and continuous time risk sensitive optimal stopping problem. Using suitable properties of the underlying Feller-Markov process we prove continuity of the optimal stopping value function and provide formula for the optimal stopping policy. Next, we show how to link continuous time framework with its discrete time analogue. By considering suitable approximations, we obtain uniform convergence of the corresponding value functions.
\end{abstract}

\begin{keywords}
 Optimal stopping, Bellman equation, risk sensitive control, risk sensitive criterion, impulse control 
\end{keywords}

\begin{AMS}
 93E20, 49N60, 93C10, 60J25
\end{AMS}

\section{Introduction}\label{S:introduction}

In this paper we consider infinite time horizon risk sensitive optimal stopping problems of the form
\begin{equation}\label{eq:intro1}
\inf_{\tau} \E_x\left[\exp\left(\int_0^{\tau} g(X_s)ds+G(X_\tau)\right)\right],
\end{equation}
where $(X_t)$ is a Feller-Markov process taking values in a locally compact separable space $E$, $x\in E$ is a starting point, and $g,G:E\to\bR$ correspond to {\it running cost} and {\it terminal cost}, respectively. The functions $g$ and $G$ are assumed to be continuous, bounded, and non-negative, and $g$ is bounded away from zero by  a constant $c>0$.

For completeness, we consider separately discrete time and continuous time version of~\eqref{eq:intro1}. In both cases, we show that the optimal value function is continuous with respect to the starting point, and the optimal stopping time could be expressed as
\[
\tau=\inf\left\{t\in \bT: w(X_t)=e^{G(X_t)}\right\},
\]
where $w$ is the optimal stopping value and $\bT$ is the considered set of time points. We discuss relationships among continuous and discrete time problems and show how to uniformly approximate \eqref{eq:intro1} using discrete time value functions. It should be noted that the results presented in this paper are valid for both infinite and finite time horizons. In fact, most approximations are constructed by considering the limit of finite time problems.

The optimal stopping problem in the risk-neutral framework has been extensively studied in the literature; see e.g.~\cite{Shi1978,Zab1984,PesShi2006,PalSte2010} and references therein. Nevertheless, most of the developed risk-neutral methods do not transfer automatically to the risk-sensitive case. In fact, due to a complex nature of the problem, the coverage of risk-sensitive optimal stopping is very limited as it requires special treatment; see e.g.~\cite{Nag2007b, BauRie2017b,BauPop2018}. More generally, this applies to most risk-sensitive stochastic control problems in which non-linear objective criterion is used; see e.g.~\cite{Whi1990}.

Also, it should be noted that optimal stopping problems might be linked to the optimal impulse control framework. 
In particular, we refer to \cite{SadSte2002,HdiKar2011,PitSte2019} and \cite{BenLio1984,MenRob2017,PalSte2017} for risk-sensitive and risk-neutral impulse control framework discussion, respectively. In fact, in the companion paper \cite{JelPitSte2019b}, we show how to exploit this link and use results from this paper in order to prove the existence of a solution to the continuous time impulse control Bellman equation using its discrete time dyadic approximations. This shows how to apply probabilistic approach and dyadic framework from~\cite{PitSte2019} in the continuous time setting. Note that in \cite{Rob1981} a similar link is established for the classical risk-neutral case.

This paper is organised as follows: In Section \ref{S:preliminaries}, we provide a basic setup and introduce notation that will be used throughout the paper. Regularity properties of optimal stopping problems for discrete time case are established in Section~\ref{S:discrete_stopping}, and for the continuous time case in Section~\ref{S:continuous_stopping}. In particular, in Section~\ref{S:continuous_stopping} we show the main result of this paper, i.e. Theorem~\ref{th:w_continuity}. Finally, in Section~\ref{S:approximation_stopping} we link discrete time and continuous time frameworks by showing specific type of convergence.

\section{Preliminaries}\label{S:preliminaries}
Let $(\Omega,\cF,\bF,\bP)$ be a continuous time filtered probability space, where $\bF:=(\cF_t)_{t\in \bT}$, $\bT:=\bR_+$, $\cF_0$ is trivial, $\cF=\bigcup_{t\in\bT}\cF_t$, and the usual conditions are satisfied. We use $X:=(X_t)$ to denote a standard Markov process taking values in a locally compact separable metric space $E$ endowed with a metric $\rho$ and Borel $\sigma$-field $\mathcal{E}$; see Definition 4 in \cite[Section 1.4]{Shi1978}.

Moreover, we assume that $X$ satisfy the $C_0$-Feller property, i.e. given the transition semigroup $(P_t)$, for each $t\geq 0$, we get
\begin{equation}\label{as1}
P_t C_0(E) \subset C_0(E),
\end{equation}
where $C_0$ denotes continuous and bounded functions that are vanishing at infinity.

In a discretised version of the framework $(X_{n\delta })$ will be restricted to a discrete time grid with a predefined time step $\delta>0$ and the filtration $(\cF_{n\delta})_{n\in\bN}$. To ease the notation, if $\delta$ is clear from the context, we use $P$ to denote the transition operator $P_{\delta}$. It should be noted that most of the results presented for the discrete time framework require only the standard Feller property, i.e. $P C(E) \subset C(E)$, where $C(E)$ is the set of continuous bounded functions.

Let $\mathcal{T}$ denote the family of almost surely finite stopping times taking values in $\bT$, and let $\mathcal{T}_{m}\subset \cT$ denote the dyadic stopping times defined on the time grid $\{0,\delta_m, 2\delta_m, \ldots\}$, where $\delta_m:=(1/2)^m$ and $m\in\bN$. In particular, note that $\mathcal{T}_0$ denotes the family of finite stopping times for time step equal to $1$. For simplicity, we adapt the standard convention
\begin{equation}\label{eq:convention_stopping}
\E_x[Z_{\tau}]:=\liminf_{n\to \infty}\E_x[Z_{\tau\wedge n}],
\end{equation}
defined for any process $(Z_t)$ starting from $x\in E$ and $\tau\in\cT$. Also, we use convention $\sum_{i=0}^{-1}(\cdot)=0$, and sometimes write $\inf_{\tau}\E_x[Z_{\tau}]$ instead of $\inf_{\tau\in\cT}\E_x[Z_{\tau}]$ or $\inf_{\tau\in\cT_0}\E_x[Z_{\tau}]$.

Before we proceed, let us summarise some properties implied by~\eqref{as1} that will be used throughout the paper; see Proposition~\ref{pr:c0.feller}. 
\begin{proposition}\label{pr:c0.feller}
Let $X=(X_t)$ satisfy $C_0$-Feller property. Then,
\begin{enumerate}
\item $X$ satisfies $C$-Feller property.
\item For any $\vep>0$, compact set $\Gamma\subset E$, and $T>0$ there exists $R>0$ such that
\begin{equation}\label{Co}
\sup_{x\in \Gamma} \mathbb{P}_x\left[\sup_{t\in [0,T]} \rho(X_t,x)\geq R\right]\leq \vep.
\end{equation}
\item For any compact set $\Gamma \subset E$ and $r > 0$, we get
\begin{equation}\label{Co2}
\lim_{T\to 0} \sup_{x\in \Gamma} \mathbb{P}_x\left[\sup_{t\in [0,T]} \rho(X_t,x)\geq r\right] =0.
\end{equation}
\item For any $t\geq 0$ and $f,h\in C(E)$, the function
\begin{equation}\label{cont1}
P_t^f h(x):=\E_x\left[e^{\int_0^t f(X_s)ds}h(X_t)\right],\quad x\in E,
\end{equation}
is continuous and bounded.
\end{enumerate}
\end{proposition}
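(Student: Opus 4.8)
The plan is to establish the four assertions in the order (1), then the one-dimensional marginal forms of (2)--(3), then their path versions, and finally (4), since each step feeds the next. Throughout I would use that $E$ is locally compact and separable, so that continuity is a local property testable on compact neighbourhoods and $C_0(E)$-functions can be manufactured by Urysohn-type cutoffs, together with conservativeness $P_t\mathbf 1=\mathbf 1$ of the process.

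For (1) I would upgrade $P_tC_0(E)\subset C_0(E)$ to $P_tC(E)\subset C(E)$ by localisation. Given bounded $h\in C(E)$ and $x_n\to x_0$, pick a compactly supported cutoff $\chi$ equal to $1$ on a large compact set $K$ and split $h=h\chi+h(1-\chi)$. Then $h\chi\in C_0(E)$, so $P_t(h\chi)(x_n)\to P_t(h\chi)(x_0)$ by \eqref{as1}, while the remainder is bounded by $\|h\|_\infty\sup_n\mathbb{P}_{x_n}[X_t\notin K]$. This remainder is killed by the marginal tightness $\sup_{x\in\Gamma}\mathbb{P}_x[X_t\notin K]\to0$ as $K\uparrow E$, uniformly over the compact set $\Gamma:=\{x_n\}\cup\{x_0\}$. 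I would derive this tightness from \eqref{as1} and conservativeness: choosing cutoffs $g_K\in C_0(E)$ with $g_K\uparrow\mathbf 1$, the functions $P_tg_K$ are continuous and increase to the continuous limit $P_t\mathbf 1=\mathbf1$, so Dini's theorem forces $\inf_{x\in\Gamma}P_tg_K(x)\to1$, which is exactly the required uniform tightness.

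For (2) and (3) I would first record the analogous marginal statements and then pass to the supremum over $[0,T]$ via a maximal inequality. The marginal form of (3), namely $\lim_{T\to0}\sup_{x\in\Gamma}\mathbb{P}_x[\rho(X_T,x)\ge r]=0$, follows by covering $\Gamma$ with finitely many balls $B(x_i,r/4)$ and dominating each $\mathbb{P}_x[\rho(X_T,x)\ge r]$ by $1-P_T\psi_i(x)$, where $\psi_i\in C_0(E)$ equals $1$ on $B(x_i,r/2)$; strong continuity of the Feller semigroup, $\|P_T\psi_i-\psi_i\|_\infty\to0$, then gives the claim, and the marginal form of (2) is the uniform tightness already used in step (1). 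To convert these marginal bounds into the path bounds \eqref{Co} and \eqref{Co2} I would apply an Ottaviani/L\'evy-type maximal inequality: writing $\sigma=\inf\{s:\rho(X_s,x)\ge R\}$ and using the strong Markov property at $\sigma$, an exit before $T$ followed by the process staying within $R/2$ of $X_\sigma$ forces $\rho(X_T,x)\ge R/2$, so that $\mathbb{P}_x[\sigma\le T]$ is controlled by the marginal probability $\mathbb{P}_x[\rho(X_T,x)\ge R/2]$ divided by a lower bound on the ``stay close'' probability. I expect this maximal-inequality step to be the main obstacle, because the strong Markov argument restarts the process at the exit location $X_\sigma$, which lies outside $\Gamma$; keeping the estimate uniform over the compact set of starting points therefore requires controlling $\inf_y\mathbb{P}_y[\sup_{s\le T}\rho(X_s,y)<R/2]$ away from $\Gamma$, and it is this uniformity, rather than any single inequality, that is delicate and must be handled using the standard-process path regularity together with the Feller estimates above.

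Finally, for (4) boundedness is immediate from $|P_t^fh(x)|\le e^{t\|f\|_\infty}\|h\|_\infty$. For continuity I would avoid differentiating the path functional and instead use the Feynman--Kac (Duhamel) identity
\[
P_t^fh(x)=P_th(x)+\int_0^t P_u\!\bigl(f\cdot P_{t-u}^fh\bigr)(x)\,du,
\]
obtained by writing $e^{\int_0^t f(X_s)ds}=1+\int_0^t f(X_u)e^{\int_u^t f(X_s)ds}\,du$ and applying the Markov property at time $u$. Iterating produces the Neumann series $P_t^fh=\sum_{n\ge0}u_n$ with $\|u_n\|_\infty\le\|h\|_\infty(t\|f\|_\infty)^n/n!$, so the series converges uniformly. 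Each $u_n$ is continuous: by (1) the operators $P_s$ preserve $C(E)$, multiplication by $f\in C(E)$ preserves continuity and boundedness, and continuity is transmitted through the $du$-integral by dominated convergence, the integrand being continuous in $x$ for each $u$ and uniformly bounded. A uniform limit of continuous functions being continuous then yields $P_t^fh\in C(E)$. Alternatively one can discretise $\int_0^t f(X_s)ds$ by Riemann sums, represent the approximation as an $n$-fold iterate of $g\mapsto e^{(t/n)f}P_{t/n}g$ (continuous by (1)) and use \eqref{Co}--\eqref{Co2} to control it uniformly on compacts; the Duhamel route, however, needs only assertion (1).
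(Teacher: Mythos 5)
Parts (1) and (4) of your argument are sound. The localisation of $P_t h$ by a compactly supported cutoff, combined with the Dini-type uniform tightness $\inf_{x\in\Gamma}P_t g_K(x)\to 1$ obtained from $P_t\mathbf{1}=\mathbf{1}$, is the standard derivation of the $C$-Feller property from \eqref{as1}. Your Duhamel/Neumann-series proof of the continuity of $P_t^f h$ is complete (the bound $\Vert u_n\Vert\le\Vert h\Vert(t\Vert f\Vert)^n/n!$ gives uniform convergence, and each term is continuous by part (1) and dominated convergence in the $du$-integral); it is a legitimate alternative to the Riemann-sum approximation in \cite{GikSko2004}. Note that the paper itself offers no proof of this proposition and defers entirely to \cite{PalSte2010}, \cite{BasSte2018} and \cite{GikSko2004}, so your write-up is in any case doing more than the text does.

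Parts (2) and (3), however, contain a genuine gap, which you flag but do not close, and the route you propose cannot be repaired as stated. The Ottaviani/L\'evy step controls $\mathbb{P}_x[\sigma_R\le T]$ by the marginal probability $\mathbb{P}_x[\rho(X_T,x)\ge R/2]$ divided by a lower bound on the ``stay close'' probability evaluated at the post-exit position $y=X_{\sigma_R}$ and the random remaining time $T-\sigma_R$. Two problems arise. First, the quantity you propose to bound below, $\mathbb{P}_y\left[\sup_{s\le T}\rho(X_s,y)<R/2\right]$, itself contains the path supremum you are trying to estimate, so the argument is circular; the correct Ottaviani form needs only the one-dimensional bound $\inf_{y}\inf_{s\le T}\mathbb{P}_y[\rho(X_s,y)<R/2]$. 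Second, and more seriously, that infimum must be taken over \emph{all} $y$ reachable at the exit time, hence (for a process with jumps, or even for continuous paths with $R\to\infty$) over essentially all of $E$, and such a uniform-in-$y$ stochastic continuity estimate does \emph{not} follow from the $C_0$-Feller property: the deterministic flow $\dot x=b(x)$ on $\mathbb{R}$ with $b$ continuous, non-explosive but unbounded is $C_0$-Feller, yet $\sup_y\mathbb{P}_y[\rho(X_s,y)\ge r]=1$ for every $s>0$ and every $r$. Properties \eqref{Co} and \eqref{Co2} remain true for that example, so it is your method, not the statement, that fails; ``standard-process path regularity'' will not supply the missing uniformity. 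The proofs the paper points to (\cite[Proposition 2.1]{PalSte2010}, \cite[Proposition 6.4]{BasSte2018}) avoid the maximal inequality altogether and instead apply optional stopping at $\sigma_R\wedge T$ to (super)martingales built from $C_0$ test functions and their resolvents, using the vanishing at infinity of those functions to make the exit probability small for large $R$ without any estimate at starting points outside $\Gamma$. Your Steps (2)--(3) need to be replaced by an argument of that type.
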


For more details and proofs of all properties given in Proposition~\ref{pr:c0.feller} we refer to \cite[Corollary 2.2]{PalSte2010}, \cite[Proposition 2.1]{PalSte2010}, \cite[Proposition 6.4]{BasSte2018}, and \cite[Lemma 4, Section II.5]{GikSko2004}.

\section{Discrete time risk sensitive optimal stopping}\label{S:discrete_stopping}
For simplicity, in this section we set $\delta=1$, and consider the linked discrete time process $(X_n)$. Considering stopping times from $\cT_0$ and following~\eqref{eq:intro1}, we define the discrete time optimal stopping value
\begin{equation}\label{eq1}
w(x):=\inf_{\tau} \E_x\left[e^{\sum_{i=0}^{\tau-1} g(X_i)+G(X_\tau)}\right],\quad x\in E,
\end{equation}
where functions $g$ and $G$ are assumed to be continuous, bounded, and non-negative, and $g$ is bounded away from zero by  a constant $c>0$. For any $n\in\bN$, the lower and upper approximates of $w$ are given by
\begin{align}
\underline w_n(x) &:=\inf_{\tau\leq n} \E_x\left[e^{\sum_{i=0}^{\tau-1} g(X_i)+1_{\{\tau<n\}}G(X_\tau) }\right],\label{eq.w.down}\\
\overline w_n(x) &:=\inf_{\tau\leq n} \E_x\left[e^{\sum_{i=0}^{\tau-1} g(X_i)+G(X_\tau)}\right].\label{eq.w.up}
\end{align}
It is easy to check that for any $n\in\mathbb{N}$ and $x\in E$ we get $\underline w_n(x) \leq w(x) \leq \overline w_n(x)$. Let us now show some properties of the functions $\underline w_n$ and $\overline w_n$ and link  them to Bellman operator
\begin{equation}\label{eq:Bellman_operator_discrete}
Sh(x) := e^{g(x)} Ph(x)\wedge e^{G(x)},\quad h\in C(E),
\end{equation}
corresponding to the Bellman equation
\begin{equation}\label{eq4b}
v(x)=e^{g(x)} Pv(x)\wedge e^{G(x)},
\end{equation}
where $v\in C(E)$; we will show later that the function given via~\eqref{eq4b} must be equal to~\eqref{eq1}, see Theorem~\ref{cor1}. 

\begin{proposition}\label{pr:wn}
Let $(\underline w_n)$ and $(\overline w_n)$ be given by~\eqref{eq.w.down} and~\eqref{eq.w.up}. Then,
\begin{enumerate}
\item The sequence $(\underline w_{n})$ is non-decreasing with initial value $\underline w_{0}(x)=1$, for $x\in E$. Moreover, for any $n\in\mathbb{N}$ and $x\in E$, we get $\underline w_{n}(x)=S\underline w_{n-1}(x)$ and the optimal stopping time for $\underline w_{n}$ is given by
\begin{equation}\label{eq3'}
\underline \tau_n:=\min\left\{i\geq 0: \underline w_{n-i}(X_i)=e^{G(X_i)}\right\}\wedge n.
\end{equation}

\item The sequence $(\overline w_{n})$ is non-increasing with initial value $\overline w_{0}(x)=e^{G(x)}$, for $x\in E$. Moreover, for any $n\in\mathbb{N}$ and $x\in E$, we get $\overline w_{n}(x)=S\overline w_{n-1}(x)$ and the optimal stopping time for $\overline w_{n}$ is given by
\begin{equation}\label{eq3''}
\overline \tau_n:=\min\left\{i\geq 0: \overline w_{n-i}(X_i)=e^{G(X_i)}\right\}.
\end{equation}

\end{enumerate}
\end{proposition}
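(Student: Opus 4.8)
The plan is to establish all four assertions in each item by a single induction on $n$, carrying along as an auxiliary hypothesis that $\underline w_n,\overline w_n\in C(E)$ so that the Bellman operator $S$ may legitimately be applied at the next step. I will treat $\overline w_n$ in detail; the argument for $\underline w_n$ is identical up to the boundary penalty $1_{\{\tau<n\}}$, which I comment on at the end. For the base case $n=0$ the only admissible stopping time is $\tau\equiv 0$, giving $\overline w_0(x)=\E_x[e^{G(X_0)}]=e^{G(x)}$ and $\underline w_0(x)=\E_x[e^{0}]=1$ (the terminal term switched off since $\{0<0\}$ is empty); both lie in $C(E)$, and the hitting-time formulas reduce to $\overline\tau_0=\underline\tau_0=0$, which are plainly optimal.

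For the inductive step I first prove the recursion $\overline w_n=S\overline w_{n-1}$. The lower bound is where the triviality of $\cF_0$ does the work: for any admissible $\tau$ the event $\{\tau=0\}=\{\tau\le 0\}$ is $\cF_0$-measurable, hence has probability $0$ or $1$ under $\bP_x$, so either $\tau\equiv 0$ or $\tau\ge 1$ almost surely. In the first case the value is exactly $e^{G(x)}$; in the second I factor out the first running-cost term, $e^{\sum_{i=0}^{\tau-1}g(X_i)+G(X_\tau)}=e^{g(x)}e^{\sum_{i=1}^{\tau-1}g(X_i)+G(X_\tau)}$, condition on $\cF_1$, and invoke the Markov property together with the very definition of $\overline w_{n-1}$ (as $\tau-1\le n-1$ is admissible for the process restarted at $X_1$) to bound the conditional expectation below by $\overline w_{n-1}(X_1)$. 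Taking expectations shows the value is at least $e^{g(x)}P\overline w_{n-1}(x)$, and combining the two cases yields $\overline w_n(x)\ge e^{G(x)}\wedge e^{g(x)}P\overline w_{n-1}(x)=S\overline w_{n-1}(x)$.

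For the matching upper bound and the optimality of $\overline\tau_n$ I use the inductively optimal stopping time one step ahead. The choice $\tau\equiv 0$ gives $\overline w_n\le e^{G(x)}$, while the shifted time $1+\overline\tau_{n-1}\circ\theta_1$ (the optimal rule applied from time $1$ on, via the shift operator $\theta_1$), which is admissible since $\overline\tau_{n-1}\le n-1$, has value exactly $e^{g(x)}P\overline w_{n-1}(x)$ by the same factor-and-condition computation, now using the \emph{optimality} of $\overline\tau_{n-1}$ to identify the conditional expectation with $\overline w_{n-1}(X_1)$; hence $\overline w_n\le S\overline w_{n-1}$ and the recursion holds. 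Continuity of $\overline w_n=S\overline w_{n-1}$ then follows from $\overline w_{n-1}\in C(E)$, the $C$-Feller property of $P$, and continuity of $e^g,e^G$, closing the auxiliary hypothesis. To see $\overline\tau_n$ is optimal, note that $\overline w_n(x)=e^{G(x)}$ precisely when immediate stopping is optimal (so $\overline\tau_n=0$ realises $\overline w_n(x)$), and otherwise $\overline w_n(x)=e^{g(x)}P\overline w_{n-1}(x)$ with $\overline\tau_n\ge 1$ and $\overline\tau_n=1+\overline\tau_{n-1}\circ\theta_1$, whose value was just shown to equal $\overline w_n(x)$. Monotonicity is cheapest from the recursion: $S$ is order-preserving, $\underline w_0\equiv 1\le e^{g}\wedge e^{G}=\underline w_1$ (using $g\ge c>0$ and $G\ge 0$) and $\overline w_1=S\overline w_0\le \overline w_0$, so induction propagates $\underline w_n\le\underline w_{n+1}$ and $\overline w_{n+1}\le\overline w_n$.

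The only genuine bookkeeping—and the step I expect to be most delicate—is transcribing the above for $\underline w_n$, where the terminal term carries the factor $1_{\{\tau<n\}}$. Here one checks that under the shift this factor becomes $1_{\{\tau-1<n-1\}}$, so it passes cleanly through the Markov step and reproduces $\underline w_{n-1}$; the novelty is the truncation $\wedge\,n$ in $\underline\tau_n$. It is needed precisely because $\underline w_0\equiv 1$ need not equal $e^{G(\cdot)}$ (as $G$ may be strictly positive), so the set $\{i:\underline w_{n-i}(X_i)=e^{G(X_i)}\}$ may be empty over the horizon; forcing a stop at time $n$ then contributes no terminal cost, exactly consistent with $1_{\{\tau<n\}}=0$ there. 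One verifies the shift identity $1+\big(\underline\tau_{n-1}\circ\theta_1\big)=\min\{i\ge 1:\underline w_{n-i}(X_i)=e^{G(X_i)}\}\wedge n$ on the continuation event, after which the optimality argument runs verbatim.
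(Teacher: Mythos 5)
Your proof is correct and takes essentially the same route as the paper: the paper delegates the recursion $\underline w_n = S\underline w_{n-1}$, $\overline w_n = S\overline w_{n-1}$ and the optimality of $\underline\tau_n,\overline\tau_n$ to the standard backward-induction argument of \cite[Section 2.2]{Shi1978}, which is precisely what you have written out (including the correct observation that $\overline w_0\equiv e^G$ makes the truncation $\wedge\,n$ superfluous for $\overline\tau_n$ but necessary for $\underline\tau_n$), and your monotonicity argument via order-preservation of $S$ together with $\underline w_1\geq \underline w_0$ and $\overline w_1\leq\overline w_0$ is the one the paper gives verbatim.
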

\begin{proof}
For brevity, we show the proof only for $\underline{w}_n$ as the proof for $\overline{w}_n$ is analogous. The fact that 
$\underline w_{n+1}(x)=S \underline w_n(x)$, for $n\in \bN$ and $x\in E$, 
and the optimality of $\underline \tau_n$ may be shown using standard techniques, see e.g. \cite[Section 2.2]{Shi1978}. The monotonicity of $\underline w_{n}(x)$ follows easily from induction. Indeed, since $g(\cdot)\geq 0$ and $G(\cdot)\geq 0$, we get $\underline w_{1}(x)=e^{g(x)} P \underline w_{0}(x)\wedge e^{G(x)}\geq  1=\underline w_{0}(x).$ Hence, by induction assumption and monotonicity of $S$, we get $\underline w_{n+1}(x)=S \underline w_{n}(x)\geq S\underline w_{n-1}(x)=\underline w_{n}(x).$ 
\end{proof}

From Proposition~\ref{pr:wn} we see that the limits
\begin{equation}\label{eq.w.both} 
\underline w(x):=\lim_{n\to\infty}\underline w_{n}(x)\quad\textrm{and}\quad\overline w(x) :=\lim_{n\to\infty}\overline w_{n}(x)
\end{equation}
are well defined. Let us now outline some properties of $\underline w$ and $\overline w$.

\begin{proposition}\label{pr:w} Let $\underline w$ and $\overline w$ be given by~\eqref{eq.w.both}. Then,
\begin{enumerate}
\item The function $\underline w$ is lower semicontinuous and $\underline w(x)\leq w(x)$, for $x\in E$. Moreover, $\underline w$ is a solution to the Bellman equation \eqref{eq4b}.
\item The function $\overline w$ is upper semicontinuous and  $\overline w(x)\geq w(x)$, for $x\in E$. Moreover, $\overline w$ is a solution to the Bellman equation \eqref{eq4b}.
\end{enumerate}
\end{proposition}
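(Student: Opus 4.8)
The plan is to prove each pair of assertions — semicontinuity, the inequality against $w$, and the Bellman property — by exploiting the monotone convergence established in Proposition~\ref{pr:wn}. The key structural observation is that $(\underline w_n)$ is a non-decreasing sequence of continuous functions, while $(\overline w_n)$ is a non-increasing sequence of continuous functions. Continuity of each $\underline w_n$ and $\overline w_n$ follows inductively from the recursion $\underline w_{n}=S\underline w_{n-1}$ together with the fact that $S$ maps $C(E)$ into itself, which in turn rests on property~(4) of Proposition~\ref{pr:c0.feller} (with $f=g$, $t=1$) guaranteeing that $P^g h = e^g Ph$ is continuous whenever $h$ is.

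For the semicontinuity claims I would invoke the standard fact that an increasing pointwise limit of lower semicontinuous (here, continuous) functions is lower semicontinuous, and dually that a decreasing pointwise limit of upper semicontinuous functions is upper semicontinuous. This immediately yields that $\underline w$ is l.s.c.\ and $\overline w$ is u.s.c. The inequalities $\underline w \le w \le \overline w$ follow by passing to the limit in $\underline w_n \le w \le \overline w_n$, which is noted in the text just after~\eqref{eq.w.up}.

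The heart of the proof is showing that both limits solve the Bellman equation~\eqref{eq4b}, i.e.\ that $\underline w = S\underline w$ and $\overline w = S\overline w$. First I would analyze the action of $S$ under the limit. Writing out $S\underline w_{n-1} = e^{g}P\underline w_{n-1}\wedge e^{G}$, the issue is whether $\lim_n P\underline w_{n-1} = P\underline w$. For the increasing sequence $\underline w_n \uparrow \underline w$ this is exactly the monotone convergence theorem applied inside the expectation defining $P$, so $P\underline w_{n-1}\uparrow P\underline w$ pointwise; passing to the limit in $\underline w_n = e^g P\underline w_{n-1}\wedge e^G$ then gives $\underline w = e^g P\underline w \wedge e^G = S\underline w$. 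The decreasing case $\overline w_n\downarrow\overline w$ is symmetric but requires dominated rather than monotone convergence: since $0\le \overline w_n\le \overline w_0 = e^{G}$ with $G$ bounded, the integrable dominating function $e^{g(x)}e^{\sup G}$ is available, so $P\overline w_{n-1}\downarrow P\overline w$ and likewise $\overline w = S\overline w$.

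The main obstacle I anticipate is the interchange of limit and expectation in the term $P\underline w_{n-1}$ and $P\overline w_{n-1}$, and more subtly the legitimacy of passing the pointwise limit through the operator $S$ while preserving continuity of the limiting object — note that the limits $\underline w,\overline w$ are only semicontinuous, not a priori continuous, so one must be careful that the Bellman identity is an equality of (semi)continuous functions rather than merely a pointwise relation. Establishing full continuity (and hence $\underline w=\overline w=w$) is deferred to later results such as Theorem~\ref{cor1}; here the task is only to pin down the one-sided regularity and verify the fixed-point property, so the argument should remain at the level of the convergence theorems just described.
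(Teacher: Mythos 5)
Your proposal is correct and follows essentially the same route as the paper: continuity of each $\underline w_n$, $\overline w_n$ by induction through the recursion $\underline w_n = S\underline w_{n-1}$ and the Feller property, semicontinuity of the monotone limits, the inequalities by passing to the limit in $\underline w_n \le w \le \overline w_n$, and the Bellman property by taking limits in the recursion. You actually supply more detail than the paper on the one step it leaves implicit — the interchange of limit and expectation in $P\underline w_{n-1}$ via monotone convergence and in $P\overline w_{n-1}$ via dominated convergence (where the relevant dominating bound is simply the constant $e^{\Vert G\Vert}$ inside the expectation) — and this detail is handled correctly.
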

\begin{proof}
By Proposition~\ref{pr:wn} we get that $\underline w_n$ could be seen as solutions to system of recursive equations
\begin{align}
\underline w_{n+1}(x)&=S \underline w_n(x),\quad \textrm{with } \underline w_{0}(x)=1,\label{eq:bakward}
\end{align}
given for $n\in\mathbb{N}$ and $x\in E$. Hence, for any $n\in\mathbb{N}$ the functions $\underline w_n$ are finite stopping values that are continuous by induction argument and the Feller property. Consequently, we get that the function $\underline w(x)$ is lower semicontinuous as a limit of non-decreasing continuous functions. Also, from \eqref{eq:bakward} it follows that $\underline w$ is a solution to the Bellman equation \eqref{eq4b}. Finally, inequality $\underline w(x) \leq w(x)$ follows from the fact that $\underline{w}_n(x)\leq w(x)$ for any $n\in \bN$ and $x\in E$. This concludes the proof for $\underline{w}$. The proof for $\overline{w}$ is similar and omitted for brevity.
\end{proof}
Now, we show the correspondence between~\eqref{eq4b} and~\eqref{eq1}.

\begin{proposition}\label{prop1} Let $v\in C(E)$ be a solution to the Bellman equation~\eqref{eq4b} such that $1\leq v(x)$ for any $x\in E$ and let $\tau_{v}:=\inf\left\{i\geq 0: v(X_i)=e^{G(X_i)}\right\}$. Then,
\begin{equation}\label{eq:prop1}
v(x)=\E_x\left[e^{\sum_{i=0}^{\tau_v-1} g(X_i)+G(X_{\tau_v})}\right],\quad x\in E,
\end{equation}
and $v$ coincides with value function $w$ defined in~\eqref{eq1}.
\end{proposition}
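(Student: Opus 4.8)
The plan is to encode the Bellman equation as a (sub)martingale property of the weighted process $M_n := e^{\sum_{i=0}^{n-1}g(X_i)}v(X_n)$ and then to combine an optional-sampling bound with a verification argument for the candidate stopping time $\tau_v$. First I would record two consequences of the hypotheses: since $v$ solves~\eqref{eq4b} we have $v(x)\le e^{G(x)}$, so together with $1\le v(x)$ and boundedness of $G$ the function $v$ is bounded, $1\le v\le e^{\|G\|_\infty}$; moreover the equation gives the pointwise inequality $v(x)\le e^{g(x)}Pv(x)$. Multiplying by the (nonnegative, $\cF_n$-measurable) weight and using the Markov property then shows that $(M_n)$ is a nonnegative submartingale. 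On the other hand, for $i<\tau_v$ we have $v(X_i)<e^{G(X_i)}$, so the minimum in~\eqref{eq4b} must be attained by the first term and $v(X_i)=e^{g(X_i)}Pv(X_i)$; hence the stopped process $(M_{n\wedge\tau_v})$ is in fact a martingale, giving the identity $v(x)=M_0=\E_x[M_{n\wedge\tau_v}]$ for every $n\in\bN$.

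Next I would prove $v\le w$. For an arbitrary $\tau\in\cT_0$ and $n\in\bN$, optional sampling of the submartingale $(M_n)$ at the bounded time $\tau\wedge n$ yields $v(x)\le\E_x[M_{\tau\wedge n}]=\E_x[e^{\sum_{i=0}^{\tau\wedge n-1}g(X_i)}v(X_{\tau\wedge n})]\le\E_x[e^{\sum_{i=0}^{\tau\wedge n-1}g(X_i)+G(X_{\tau\wedge n})}]$, where the last inequality uses $v\le e^{G}$. Taking $\liminf_{n\to\infty}$ and recalling the convention~\eqref{eq:convention_stopping} gives $v(x)\le\E_x[e^{\sum_{i=0}^{\tau-1}g(X_i)+G(X_\tau)}]$, and taking the infimum over $\tau$ gives $v(x)\le w(x)$.

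It then remains to establish~\eqref{eq:prop1}, which forces the reverse inequality. Starting from the martingale identity $v(x)=\E_x[M_{n\wedge\tau_v}]$ and the bounds $g\ge c$, $v\ge 1$, I would first extract integrability: since $M_{n\wedge\tau_v}\ge e^{c(n\wedge\tau_v)}$, monotone convergence gives $\E_x[e^{c\tau_v}]\le v(x)<\infty$, so $\tau_v<\infty$ almost surely; the same identity with $v\ge 1$ and monotone convergence then yields $\E_x[e^{\sum_{i=0}^{\tau_v-1}g(X_i)}]\le v(x)$. Consequently $M_{n\wedge\tau_v}\le e^{\|G\|_\infty}e^{\sum_{i=0}^{\tau_v-1}g(X_i)}=:Y$ with $\E_x[Y]\le e^{\|G\|_\infty}v(x)<\infty$, so the stopped martingale is dominated by an integrable random variable, hence uniformly integrable. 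Since $\tau_v<\infty$ a.s.\ and $v(X_{\tau_v})=e^{G(X_{\tau_v})}$, we have $M_{n\wedge\tau_v}\to e^{\sum_{i=0}^{\tau_v-1}g(X_i)+G(X_{\tau_v})}$ almost surely, and uniform integrability lets me pass to the limit in $v(x)=\E_x[M_{n\wedge\tau_v}]$ to obtain~\eqref{eq:prop1}. This exhibits $\tau_v$ as an admissible stopping time of value $v(x)$, whence $w(x)\le v(x)$; combined with the previous paragraph, $v=w$.

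The main obstacle is the final limit passage: a priori the tail contribution $\E_x[M_n 1_{\{\tau_v>n\}}]$ need not vanish, and the crude estimate $\bP_x[\tau_v>n]\le e^{\|G\|_\infty}e^{-cn}$ only shows that it stays bounded. The resolution is that the strict positivity $g\ge c$ together with $v\ge 1$ is precisely what upgrades the stopped process to a \emph{uniformly integrable} martingale through the dominating variable $Y$, while the boundedness of $v$ (equivalently, of $e^{G}$) is what makes $Y$ integrable. Once uniform integrability is secured, convergence of the expectations is automatic.
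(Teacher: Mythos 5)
Your proof is correct, and its second half (the verification that $\tau_v$ attains the value $v(x)$) is essentially identical to the paper's: both show that the stopped process $e^{\sum_{i=0}^{\tau_v\wedge n-1}g(X_i)}v(X_{\tau_v\wedge n})$ is a martingale, extract $\E_x[e^{c\tau_v}]\leq v(x)<\infty$ from $g\geq c$ and $1\leq v\leq e^{G}$, and pass to the limit by domination with $e^{\sum_{i=0}^{\tau_v-1}g(X_i)}e^{\Vert G\Vert}$. Where you genuinely diverge is the inequality $v\leq w$. The paper obtains it indirectly: from $v\leq e^{G}=\overline w_0$ and monotonicity of the Bellman operator $S$ it gets $v=S^nv\leq S^n\overline w_0=\overline w_n\downarrow\overline w$, and then argues $\overline w\leq w$ by exchanging $\liminf_n$ with $\inf_{\tau}$ as in~\eqref{eq:ineq_overline_w}. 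You instead read off from~\eqref{eq4b} the pointwise bound $v\leq e^{g}Pv$, conclude that the unstopped process $M_n=e^{\sum_{i=0}^{n-1}g(X_i)}v(X_n)$ is a nonnegative submartingale, and apply optional sampling at the bounded time $\tau\wedge n$ followed by $v\leq e^{G}$ and the convention~\eqref{eq:convention_stopping}. Your route is more direct and self-contained --- it avoids introducing the finite-horizon approximants $\overline w_n$ and the slightly delicate interchange of limit and infimum --- whereas the paper's detour has the side benefit of establishing $\overline w\equiv w$ along the way, which is then reused in Theorem~\ref{cor1}. Both arguments are sound.
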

\begin{proof}
Let $v$ be a solution to~\eqref{eq4b} satisfying $1\leq v(x)$, for $x\in E.$ From the Bellman equation~\eqref{eq4b} we immediately get that the sequence of random variables $(z_n)$ given by
\begin{equation}\label{eq6```}
z_n:=e^{\sum_{i=0}^{\tau_v\wedge n-1} g(X_i)}v(X_{\tau_v\wedge n}),\quad n\in\bN,
\end{equation}
is a martingale under $\bP_x$, for any $x\in E$. Indeed, for any fixed $x\in E$, we get
\begin{align*} 
\E_x\left[z_{n+1}|\mathcal{F}_n\right] & =1_{\{\tau_v\leq n\}}\E_x\left[z_{n+1}|\mathcal{F}_n\right]+1_{\{\tau_v>n\}}\E_x\left[z_{n+1}|\mathcal{F}_n\right] \\
& = 1_{\{\tau_v\leq n\}} z_n + 1_{\{\tau_v>n\}}e^{\sum_{i=0}^{\tau_v\wedge n} g(X_i)}\bE_x\left[v(X_{n+1})|\mathcal{F}_n\right]  \\
& = 1_{\{\tau_v\leq n\}} z_n +1_{\{\tau_v>n\}}e^{\sum_{i=0}^{\tau_v\wedge n-1} g(X_i)}e^{g(X_n)}Pv(X_n)=z_n. 
\end{align*}
Consequently, we get
\begin{equation}\label{eqim}
v(x)=\E_x\left[z_0\right]=\E_x\left[z_n\right]=\E_x\left[e^{\sum_{i=0}^{\tau_v\wedge n-1} g(X_i)}v(X_{\tau_v\wedge n})\right].
\end{equation}
Using $1 \leq v(\cdot)\leq e^{G(\cdot)}$ and~\eqref{eqim}, we get
\begin{equation}
\E_x\left[e^{\sum_{i=0}^{\tau_v\wedge n-1} g(X_i)}\right] \leq \E_x\left[e^{\sum_{i=0}^{\tau_v\wedge n-1} g(X_i)}v(X_{\tau_v\wedge n})\right]=v(x)\leq e^{G(x)}.
\end{equation}
Therefore, recalling that $g(\cdot)\geq c>0$ and using Fatou Lemma, for any $x\in E$, we get
\begin{equation}\label{eq:prop1:integrability}
\E_x\left[e^{c \tau_v}\right]\leq \E_x\left[e^{\sum_{i=0}^{\tau_v-1} g(X_i)}\right]\leq e^{G(x)}<\infty.
\end{equation}
Now, observe that the sequence $\left(e^{\sum_{i=0}^{\tau_v\wedge n-1} g(X_i)}v(X_{\tau_v\wedge n})\right)_{n\in \mathbb{N}}$ is bounded by the random variable $e^{\sum_{i=0}^{\tau_v-1} g(X_i)} e^{\Vert G \Vert}$ that is integrable by~\eqref{eq:prop1:integrability}.  Thus, letting $n\to\infty$ in~\eqref{eqim} and recalling definition of $\tau_v$, we get
\begin{equation}\label{eqim'}
v(x)=\E_x\left[e^{\sum_{i=0}^{\tau_v-1} g(X_i)}v(X_{\tau_v})\right]=
\E_x\left[e^{\sum_{i=0}^{\tau_v-1} g(X_i)+G(X_{\tau_v})}\right],
\end{equation}
which concludes the proof of~\eqref{eq:prop1}; note that $v$ could be rewritten as
\begin{equation}\label{eq:opt_disc}
v(x)=\lim_{n\to \infty} \E_x\left[e^{\sum_{i=0}^{\tau_v\wedge n-1} g(X_i)+G(X_{\tau_v\wedge n})}\right].
\end{equation}

Using~\eqref{eqim'} we get that any solution $v$ to the Bellman equation~\eqref{eq4b} such that $v(\cdot)\geq 1$ satisfies $v(x)\geq w(x)$, $x\in E$, where $w$ is given in~\eqref{eq1}. In particular, note that by Proposition~\ref{pr:wn} we immediately get 
$\underline{w}(x)=w(x)$, $x\in E$.

Let us now show that $v(x) = w(x)$, for $x\in E$, where $w$ is given in~\eqref{eq1}. For any $x\in E$ we get $v(x)\leq e^{G(x)}=\overline w_0(x)$. Recalling the fact that $v$ is a solution to the Bellman equation~\eqref{eq4b}, i.e. $Sv=v$, using recursive property $S\overline w_{n}\equiv \overline w_{n+1}$, and monotonicity of $S$, for any $x\in E$ we get
\[
v(x) =\lim_{n\to\infty} S^nv(x) \leq \lim_{n\to\infty} S^n\overline w_{0}(x) = \lim_{n\to\infty} \overline w_{n}(x)=\overline w(x).
\]
Consequently, it is sufficient to show that $\overline{w}(x)\leq w(x)$, $x\in E$. Note that for any $n\in \mathbb{N}$ and $\hat\tau\in \cT_0$ we get
\[
\inf_{\tau \leq n}  \E_x\left[e^{\sum_{i=0}^{\tau-1} g(X_i)+G(X_\tau)}\right] \leq  \E_x\left[e^{\sum_{i=0}^{\hat\tau\wedge n-1} g(X_i)+G(X_{\hat\tau\wedge n})}\right].
\]
Thus, recalling convention~\eqref{eq:convention_stopping} and noting that
\begin{equation}\label{eq:ineq_overline_w}
\liminf_{n\to\infty}\inf_{\tau \leq n}  \E_x\left[e^{\sum_{i=0}^{\tau-1} g(X_i)+G(X_\tau)}\right] \leq  \inf_{\hat\tau}\liminf_{n\to\infty}\E_x\left[e^{\sum_{i=0}^{\hat\tau\wedge n-1} g(X_i)+G(X_{\hat\tau\wedge n})}\right],
\end{equation}
we get $\overline{w}\leq w$. This concludes the proof.
\end{proof}

\begin{corollary}\label{cor:unique_Bellman}
There is a unique solution to the Bellman equation~\eqref{eq4b} within the class of measurable functions v such that $1\leq v(x)$, $x\in E$.
\end{corollary}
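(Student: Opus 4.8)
The plan is to deduce the corollary directly from the proof of Proposition~\ref{prop1}, the key observation being that continuity of $v$ was never genuinely used there: only measurability of $v$, the two-sided bound $1\le v\le e^{G}$ forced by the Bellman equation, and the strict positivity $g\ge c>0$ entered the argument. Existence is immediate, since by Proposition~\ref{pr:w} the function $\underline w$ solves~\eqref{eq4b}, and because $\underline w_0\equiv 1$ with $(\underline w_n)$ non-decreasing we have $\underline w\ge 1$; thus~\eqref{eq4b} admits at least one solution in the required class. It then remains to show that every \emph{measurable} $v$ with $v\ge 1$ solving~\eqref{eq4b} coincides with $w$.

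For the lower bound $v\ge w$, I would first note that for measurable $v$ the time $\tau_v=\inf\{i\ge 0: v(X_i)=e^{G(X_i)}\}$ is a genuine stopping time, since each set $\{v(X_i)=e^{G(X_i)}\}$ is $\cF_i$-measurable. The martingale computation of Proposition~\ref{prop1} then applies verbatim: on $\{\tau_v>n\}$ the Bellman equation forces $v(X_n)=e^{g(X_n)}Pv(X_n)$, so that the sequence $(z_n)$ from~\eqref{eq6```} is a $\bP_x$-martingale and $v(x)=\E_x[z_n]$. The a priori estimate~\eqref{eq:prop1:integrability}, derived from $1\le v\le e^{G}$ and $g\ge c$ via Fatou, shows $\E_x[e^{c\tau_v}]<\infty$ (so $\tau_v$ is a.s.\ finite, hence $\tau_v\in\cT_0$) and provides the integrable dominating variable $e^{\sum_{i=0}^{\tau_v-1}g(X_i)}e^{\Vert G\Vert}$. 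Dominated convergence then yields $v(x)=\E_x[e^{\sum_{i=0}^{\tau_v-1}g(X_i)+G(X_{\tau_v})}]\ge w(x)$, and none of these steps used continuity of $v$.

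For the upper bound $v\le w$, I would exploit that the Bellman operator $S$ is monotone and well defined on bounded measurable functions, since $Ph(x)=\E_x[h(X_1)]$ makes sense for bounded measurable $h$. By the $\wedge$-structure of~\eqref{eq4b} we have $v\le e^{G}=\overline w_0$, and $Sv=v$; iterating and using $S^n\overline w_0=\overline w_n$ from Proposition~\ref{pr:wn} gives $v=S^n v\le S^n\overline w_0=\overline w_n$ for every $n$. Letting $n\to\infty$ produces $v\le\overline w=w$, where $\overline w=w$ is established in Proposition~\ref{prop1}. Combining the two bounds gives $v=w$, which is the asserted uniqueness.

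The main obstacle is bookkeeping rather than a new difficulty: one must verify that each ingredient of Proposition~\ref{prop1} — measurability of $\tau_v$, the martingale identity, the integrability estimate, and the monotone iteration of $S$ — survives the weakening from continuous to merely measurable $v$. Since all of these rest only on the positivity and order properties of $P$ together with the pointwise identity~\eqref{eq4b}, the extension is clean, and the corollary follows.
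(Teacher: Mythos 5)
Your proposal is correct and follows essentially the route the paper intends: the corollary is stated as an immediate consequence of Proposition~\ref{prop1}, whose proof (the martingale identity, the bound $1\le v\le e^{G}$, the Fatou estimate, and the monotone iteration $v=S^nv\le S^n\overline w_0=\overline w_n$) indeed uses only measurability of $v$, not continuity. Your explicit verification that each step survives the weakening to measurable $v$, together with existence via $\underline w$, is exactly the bookkeeping the paper leaves implicit.
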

We conclude this section by stating Theorem~\ref{cor1}, which shows equality of maps defined in~\eqref{eq.w.both} and~\eqref{eq1}, and outlines properties of $w$.
\begin{theorem}\label{cor1} The function $w$ defined in~\eqref{eq1} is continuous and bounded. Moreover, we get $\underline w\equiv w\equiv\overline w$, and the optimal stopping time is given by
\[
\hat \tau:=\inf\left\{i\geq 0: w(X_i)=e^{G(X_i)}\right\}.
\]
\end{theorem}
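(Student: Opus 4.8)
The plan is to assemble Theorem~\ref{cor1} from the pieces already established, chiefly Proposition~\ref{pr:w}, Proposition~\ref{prop1}, and Corollary~\ref{cor:unique_Bellman}. First I would recall from Proposition~\ref{pr:w} that $\underline w$ is lower semicontinuous and $\overline w$ is upper semicontinuous, that both solve the Bellman equation~\eqref{eq4b}, and that $\underline w(x)\le w(x)\le\overline w(x)$ for all $x\in E$. Since $g,G\ge 0$ we have $\underline w(x)\ge 1$ and likewise $\overline w(x)\ge 1$, so both functions fall within the class covered by Corollary~\ref{cor:unique_Bellman}. By uniqueness of the solution to~\eqref{eq4b} in that class we conclude $\underline w\equiv\overline w$, and the sandwich inequality then forces $\underline w\equiv w\equiv\overline w$.

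Continuity of $w$ would then follow immediately: $w$ agrees with $\underline w$, hence is lower semicontinuous, and it agrees with $\overline w$, hence is upper semicontinuous, so it is continuous. Boundedness is clear since $1\le w(x)\le e^{G(x)}\le e^{\Vert G\Vert}$, the lower bound coming from $g,G\ge 0$ and the upper bound from taking the immediate-stop strategy $\tau=0$ in~\eqref{eq1}.

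For the optimal stopping time I would invoke Proposition~\ref{prop1} directly. Having shown that $w$ is a continuous solution to~\eqref{eq4b} with $w\ge 1$, the hypotheses of Proposition~\ref{prop1} are met with $v=w$. That proposition identifies $\tau_v=\inf\{i\ge 0:v(X_i)=e^{G(X_i)}\}$ as the optimal stopping time and shows the value is attained along it; specialising $v=w$ gives exactly $\hat\tau=\inf\{i\ge 0:w(X_i)=e^{G(X_i)}\}$ and the representation $w(x)=\E_x[e^{\sum_{i=0}^{\hat\tau-1}g(X_i)+G(X_{\hat\tau})}]$, confirming optimality.

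The only genuine subtlety is confirming that the uniqueness class of Corollary~\ref{cor:unique_Bellman} really captures both limit functions, i.e.\ verifying $\underline w\ge 1$ and $\overline w\ge 1$; this is immediate from the monotonicity and initial values recorded in Proposition~\ref{pr:wn} ($\underline w_0\equiv 1$ with $\underline w_n$ non-decreasing, and $\overline w_n\ge\underline w_n\ge 1$). Everything else is bookkeeping, since the hard analytic work—the semicontinuity of the limits, the Bellman characterisation, and the martingale argument underpinning uniqueness—has already been carried out in the preceding propositions. Thus the proof is essentially a synthesis step, and I would expect it to be short.
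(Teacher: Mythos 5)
Your proposal is correct and matches the paper's intended argument exactly: the authors state that Theorem~\ref{cor1} follows directly from Proposition~\ref{pr:w}, Proposition~\ref{prop1}, and Corollary~\ref{cor:unique_Bellman}, which is precisely the synthesis you carry out (uniqueness in the class $v\geq 1$ forces $\underline w\equiv\overline w$, the sandwich gives $w$, semicontinuity from both sides gives continuity, and Proposition~\ref{prop1} with $v=w$ yields optimality of $\hat\tau$). No gaps.
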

The proof of Theorem~\ref{cor1} is a direct result of Proposition~\ref{pr:w}, Proposition~\ref{prop1} and Corollary~\ref{cor:unique_Bellman}, and is omitted for brevity.

\section{Continuous time risk sensitive optimal stopping}\label{S:continuous_stopping}
Let us now translate results presented in Section~\ref{S:discrete_stopping} into continuous time setting. In analogy to~\eqref{eq1}, for stopping times from $\cT$, we define the function 
\begin{equation}\label{eq8}
w(x):=\inf_{\tau} \E_x\left[e^{\int_0^{\tau} g(X_s)ds+G(X_\tau)}\right].
\end{equation}
Also, for any $T\geq 0$, we define versions of~\eqref{eq.w.down} and~\eqref{eq.w.up} given by
\begin{align}
\underline{w}_T(x) & :=\inf_{\tau\leq T} \E_x\left[e^{\int_0^\tau g(X_s)ds + 1_{\{\tau<T\}}G(X_\tau) }\right],\label{eq:w^t_approx_from_below}\\
\overline{w}_T(x) & :=\inf_{\tau\leq T} \E_x\left[e^{\int_0^{\tau} g(X_s)ds+G(X_{\tau})}\right]\label{eq:w^T_approx_from_above}.
\end{align}
As before, for any $T\geq 0$ and $x\in E$, we get $\underline w_T(x) \leq w(x) \leq \overline w_T(x)$.  Our main goal is to prove continuity of the function $w$ defined in~\eqref{eq8} by exploiting properties of~\eqref{eq:w^t_approx_from_below} and~\eqref{eq:w^T_approx_from_above}. Before we do that, let us state some auxiliary results.

We start with a convergence result that will be used multiple times throughout this paper.

\begin{lemma}\label{lm:convergence_terminal}
Let $\tau$ be a bounded stopping time and let $(a_n)$ be a sequence of non-negative numbers such that $a_n\downarrow 0$ as $n\to\infty$. Then, for any sequence $(\tau_n)$ of stopping times satisfying $0\leq \tau_n-\tau \leq a_n$ and a compact set $\Gamma \subset E$, we get
\[
\lim_{n\to\infty} \sup_{x\in \Gamma} \E_x \left| e^{G(X_{\tau_n})}-e^{G(X_{\tau})}\right|=0.
\]
\end{lemma}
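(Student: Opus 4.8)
The plan is to reduce the claim to a small-time estimate via the strong Markov property, and to cope with the fact that $e^{G}$ need not be uniformly continuous on the whole (non-compact) space $E$ by confining the stopped position $X_\tau$ to a compact set with the help of the tightness bounds~\eqref{Co} and~\eqref{Co2}. Fix $T>0$ with $\tau\le T$; then every $\tau_n\le \tau+a_n\le T+a_1=:T'$, so all stopping times involved are bounded by $T'$, and $e^{G}\le e^{\Vert G\Vert}$. Given $\vep>0$, I would apply~\eqref{Co} with the compact set $\Gamma$ and horizon $T'$ to obtain $R>0$ such that $\sup_{x\in\Gamma}\bP_x[\sup_{t\le T'}\rho(X_t,x)\ge R]\le\vep$. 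On the complementary event the whole path up to $T'$ stays within distance $R$ of its starting point $x\in\Gamma$, so in particular $X_\tau$ and $X_{\tau_n}$ both lie in the compact set $K:=\{z\in E:\rho(z,\Gamma)\le R\}$ (this is exactly the confinement role of~\eqref{Co}). Splitting the expectation along this event and bounding the integrand crudely by $2e^{\Vert G\Vert}$ off it, it remains to control $\E_x\!\left[1_{\{X_\tau\in K\}}\,|e^{G(X_{\tau_n})}-e^{G(X_\tau)}|\right]$ uniformly over $x\in\Gamma$.

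Next I would invoke the strong Markov property at $\tau$. Pathwise, since $\tau_n-\tau\in[0,a_n]$,
\[
\left|e^{G(X_{\tau_n})}-e^{G(X_\tau)}\right|\le\sup_{0\le s\le a_n}\left|e^{G(X_{\tau+s})}-e^{G(X_\tau)}\right|,
\]
and the right-hand side is the shift by $\tau$ of the path functional $\sup_{0\le s\le a_n}|e^{G(X_s)}-e^{G(X_0)}|$. Conditioning on $\cF_\tau$ and applying the strong Markov property therefore gives
\[
\E_x\!\left[\,\bigl|e^{G(X_{\tau_n})}-e^{G(X_\tau)}\bigr|\;\middle|\;\cF_\tau\right]\le\phi_n(X_\tau),\qquad \phi_n(y):=\E_y\!\left[\sup_{0\le s\le a_n}\left|e^{G(X_s)}-e^{G(y)}\right|\right].
\]
Consequently the quantity from the first step is dominated by $\sup_{y\in K}\phi_n(y)$, which no longer depends on $x$, and it suffices to prove $\sup_{y\in K}\phi_n(y)\to0$ as $n\to\infty$.

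Finally I would establish this last convergence from~\eqref{Co2} together with the uniform continuity of $e^{G}$ on a compact neighbourhood of $K$. Since $K$ is compact and $E$ is locally compact, for a sufficiently small $\eta_0>0$ the closed neighbourhood $K_{\eta_0}:=\{z:\rho(z,K)\le\eta_0\}$ is compact, and the continuous function $e^{G}$ is uniformly continuous there; so, given $\vep$, I would pick $\eta\in(0,\eta_0]$ with $|e^{G(z)}-e^{G(y)}|<\vep$ whenever $y,z\in K_{\eta_0}$ and $\rho(z,y)<\eta$. On the event $\{\sup_{s\le a_n}\rho(X_s,y)<\eta\}$ the path stays in $K_{\eta_0}$ and the increment is below $\vep$, so
\[
\phi_n(y)\le\vep+2e^{\Vert G\Vert}\,\bP_y\!\left[\sup_{0\le s\le a_n}\rho(X_s,y)\ge\eta\right],\qquad y\in K.
\]
Taking the supremum over $y\in K$ and invoking~\eqref{Co2} (with compact set $K$, radius $\eta$, and $a_n\downarrow0$) yields $\limsup_{n}\sup_{y\in K}\phi_n(y)\le\vep$; since $\vep>0$ is arbitrary, this gives the claim.

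The main obstacle is the tension between the random terminal times and the absence of uniform continuity of $e^{G}$ on the non-compact space $E$. One must simultaneously replace the comparison of $X_{\tau_n}$ with $X_\tau$ by a genuine small-time estimate---for which the strong Markov reduction to $\phi_n$ is essential, because $\tau_n-\tau$ is random and only controlled in size---and confine $X_\tau$ to a compact set uniformly over the starting points $x\in\Gamma$, for which the tightness estimate~\eqref{Co} is essential. Once both localisations are in place, the modulus estimate~\eqref{Co2} and the uniform continuity of $e^{G}$ on the compact neighbourhood $K_{\eta_0}$ close the argument routinely.
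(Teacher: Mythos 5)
Your proposal is correct and follows essentially the same route as the paper's proof: localisation of $X_\tau$ via the tightness estimate~\eqref{Co}, reduction to a small-time increment through the strong Markov property at $\tau$, and control of that increment by combining uniform continuity of $e^{G}$ on a compact enlargement with~\eqref{Co2}. The only differences are cosmetic (your horizon $T'=T+a_1$ and the explicit function $\phi_n$ versus the paper's $\E_{X_\tau}[\sup_{t\in[0,a_n]}Z(t,0)]$), and your choice of $\eta_0$ ensuring compactness of $K_{\eta_0}$ is, if anything, slightly more careful than the paper's corresponding set $B$.
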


\begin{proof}
Let $\tau\in \cT$ be such that $\tau\leq T$, for some $T\in\bR_{+}$, and let $\vep>0$. Fix compact set $\Gamma \subset E$. Property~\eqref{Co} implies that there exists $R>0$ such that 
\begin{equation}\label{eq.C0.lem1}
\sup_{x\in \Gamma} \mathbb{P}_x\left[\sup_{t\in [0,T]} \rho(X_t,x)\geq R\right]\leq \vep.
\end{equation}
For brevity, we set $Z(t,s):=|e^{G(X_{t})}-e^{G(X_{s})}|$, $t,s\geq 0$. By~\eqref{eq.C0.lem1}, we get
\begin{align}\label{eq:lm:convergence_terminal:eq2}
\sup_{x\in \Gamma} \E_x \left[Z(\tau_n,\tau)\right] & \leq \sup_{x\in \Gamma} \E_x\left[1_{\{\rho(X_{\tau},x)\geq R\}}Z(\tau_n,\tau) +1_{\{\rho(X_{\tau},x)< R\}}Z(\tau_n,\tau) \right] \nonumber \\
 & \leq 2 \vep  e^{\Vert G\Vert} + \sup_{x\in \Gamma} \E_x\left[1_{\{\rho(X_{\tau},x)< R\}} Z(\tau_n,\tau)\right]  .
\end{align}
Set $B:=\{x\in E\colon \rho(\Gamma,x)\leq R+1\}$. Since $e^{G(\cdot)}$ is uniformly continuous on $B$, we can find $r>0$ such that
\begin{equation*}\label{eq:lm:convergence_terminal:continuity_tilde_g}
\sup_{z,y\in B\colon \rho(z,y)\leq r} |e^{G(y)}-e^{G(z)}|\leq \vep.
\end{equation*}
Recalling that $a_n\downarrow0$ and using~\eqref{Co2}, we know that there exists $n_0\in\bN$ such that, for any $n\geq n_0$, we get
\begin{equation*}\label{eq:lm:convergence_terminal:exit_convergence}
\sup_{x\in B} \mathbb{P}_x\left[\sup_{t\in [0,a_n]} \rho(x,X_t)\geq r\right]\leq \vep.
\end{equation*}
Also, using strong Markov property, we get
\[
\sup_{x\in \Gamma} \E_x\left[1_{\{\rho(X_{\tau},x)< R\}} Z(\tau_n,\tau) \right] \leq \sup_{x\in \Gamma} \E_x\left[1_{\{\rho(X_{\tau},x)< R\}} \E_{X_{\tau}} \left[\sup_{t\in[0,a_n]}Z(t,0) \right]\right].
\]
Consequently, noting that
\begin{align*}
\sup_{x\in \Gamma} \E_x\left[1_{\{\rho(X_{\tau},x)< R\}} \E_{X_{\tau}} \left[1_{\{\sup_{t\in[0,a_n]} \rho(x,X_t)< r\}} \sup_{t\in[0,a_n]}Z(t,0) \right]\right] & \leq \vep,\\
\sup_{x\in \Gamma} \E_x\left[1_{\{\rho(X_{\tau},x)< R\}} \E_{X_{\tau}} \left[1_{\{\sup_{t\in[0,a_n]} \rho(x,X_t)\geq r\}} \sup_{t\in[0,a_n]}Z(t,0) \right]\right] & \leq 2 \vep e^{\Vert G \Vert},
\end{align*}
we get
\begin{equation}\label{eq:lm:convergence_terminal:eq3}
\sup_{x\in \Gamma} \E_x\left[1_{\{\rho(X_{\tau},x)< R\}} Z(\tau_n,\tau) \right] \leq \vep+  2 \vep e^{\Vert G \Vert}.
\end{equation}
Combining~\eqref{eq:lm:convergence_terminal:eq2} with~\eqref{eq:lm:convergence_terminal:eq3}, for $n\geq n_0$, we get
\begin{equation}\label{eq:uniform.tau}
\sup_{x\in \Gamma} \E_x\left|e^{G(X_{\tau_n})}-e^{G(X_{\tau})}\right| \leq \vep (4  e^{\Vert G \Vert}+1),
\end{equation}
which concludes the proof.
\end{proof}
Next, the properties of maps $T\mapsto \underline w_{T}(x)$ and $T\mapsto \overline w_{T}(x)$, for a fixed $x\in\E$, are presented in Proposition~\ref{pr:w.time}.

\begin{proposition}\label{pr:w.time}
Let $\underline w_T$ and $\overline w_T$ be given by~\eqref{eq:w^t_approx_from_below} and~\eqref{eq:w^T_approx_from_above}. Then,
\begin{enumerate}
\item For any $x\in E$, the function $T\mapsto \underline w_{T}(x)$ is continuous and non-decreasing with initial value $\underline w_{0}(x)=1$.
\item For any $x\in E$, the function $T\mapsto \overline w_{T}(x)$ is continuous and non-increasing with initial value $\overline w_{0}(x)=e^{G(x)}$.
\end{enumerate}
\end{proposition}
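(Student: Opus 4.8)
The plan is to treat both assertions by the same two-step scheme: first pin down the boundary value and the direction of monotonicity, then estimate the increment for $T\le T'$ ranging in a bounded interval, while exploiting the structural difference between the two functionals. For the initial values, only $\tau\equiv 0$ is admissible at horizon $0$, giving $\underline w_0(x)=\E_x[e^{0}]=1$ (the factor $1_{\{\tau<0\}}$ vanishes) and $\overline w_0(x)=\E_x[e^{G(X_0)}]=e^{G(x)}$. Monotonicity of $\overline w_T$ is immediate: the admissible sets $\{\tau\le T\}$ are nested and the functional $C(\tau):=e^{\int_0^\tau g(X_s)ds+G(X_\tau)}$ is independent of $T$, so $T\mapsto\overline w_T(x)$ is non-increasing. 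For $\underline w_T$ the functional depends on $T$ through $1_{\{\tau<T\}}$, so I would argue by a pointwise coupling: for $T\le T'$ and any $\tau\le T'$ the truncation $\tau\wedge T$ satisfies $e^{\int_0^{\tau\wedge T}g+1_{\{\tau\wedge T<T\}}G(X_{\tau\wedge T})}\le e^{\int_0^{\tau}g+1_{\{\tau<T'\}}G(X_\tau)}$, as one checks in the three cases $\tau<T$, $\tau=T$, $\tau>T$ using $g,G\ge 0$; infimising gives $\underline w_T\le\underline w_{T'}$.

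For continuity of $T\mapsto\underline w_T(x)$ I expect no path-regularity input to be needed. Given $T\le T'$, I would map each admissible $\sigma\le T$ to the stopping time $\Phi(\sigma)$ equal to $\sigma$ on $\{\sigma<T\}$ and to $T'$ on $\{\sigma=T\}$, which is bounded by $T'$. Its horizon-$T'$ cost agrees with the horizon-$T$ cost of $\sigma$ on $\{\sigma<T\}$ and differs only on $\{\sigma=T\}$, where both costs avoid $G$ and the discrepancy equals $e^{\int_0^T g}(e^{\int_T^{T'}g}-1)\le e^{\Vert g\Vert T'}(e^{\Vert g\Vert(T'-T)}-1)$. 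Taking expectations and infimising over $\sigma$ yields $0\le\underline w_{T'}(x)-\underline w_T(x)\le e^{\Vert g\Vert T'}(e^{\Vert g\Vert(T'-T)}-1)$, which tends to $0$ as $T'-T\to0$, uniformly for $T,T'$ in any bounded interval; hence $T\mapsto\underline w_T(x)$ is (uniformly) continuous.

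For $\overline w_T$ this horizon-extension trick is unavailable, since every stopping incurs $G$, so the comparison genuinely relocates the terminal point and path regularity enters. Using the inclusion $\{\tau\wedge T:\tau\le T'\}\subseteq\{\sigma:\sigma\le T\}$ together with $\inf A-\inf B\le\sup(A-B)$, I would write $0\le\overline w_T(x)-\overline w_{T'}(x)\le\sup_{\tau\le T'}\E_x|C(\tau\wedge T)-C(\tau)|$. Splitting off the running cost via $|C(\tau)-C(\tau\wedge T)|\le e^{\Vert g\Vert T'}|e^{G(X_\tau)}-e^{G(X_{\tau\wedge T})}|+e^{\Vert G\Vert+\Vert g\Vert T'}(e^{\Vert g\Vert(T'-T)}-1)$ reduces matters to showing $\sup_{\tau\le T'}\E_x|e^{G(X_\tau)}-e^{G(X_{\tau\wedge T})}|\to0$ as $T'-T\to0$. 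This is the content of Lemma~\ref{lm:convergence_terminal}, applied with the bounded stopping time $\sigma=\tau\wedge T\le T$ and its perturbation $\tau$, for which $0\le\tau-\sigma\le T'-T$.

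The main obstacle I anticipate is that here $\tau$ ranges over all stopping times $\le T'$, whereas Lemma~\ref{lm:convergence_terminal} is phrased for a single fixed $\tau$. I would resolve this by noting that the estimate obtained in the proof of that lemma, culminating in~\eqref{eq:uniform.tau}, depends on the pair $(\sigma,\tau)$ only through $\sigma\le T$ and $0\le\tau-\sigma\le a$: the radius $R$, the set $B$, the modulus $r$ and the index $n_0$ are all chosen before any stopping time is fixed, and the strong Markov step bounds the increment uniformly through $\E_{X_\sigma}[\sup_{t\le a}|e^{G(X_t)}-e^{G(X_0)}|]$. The convergence is therefore uniform over this family of pairs, exactly as the supremum over $\tau\le T'$ demands; feeding this back gives uniform continuity of $T\mapsto\overline w_T(x)$ on bounded intervals. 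Taking $\Gamma=\{x\}$ throughout keeps everything at the pointwise level required by the statement, completing the proof.
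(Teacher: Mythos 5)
Your proposal is correct, and for part (1) it takes a genuinely different route from the paper. For $\overline w_T$ you follow essentially the paper's own scheme: the truncation $\tau\mapsto\tau\wedge T$, the bound $0\leq\overline w_T(x)-\overline w_{T'}(x)\leq\sup_{\tau\leq T'}\E_x\left|C(\tau\wedge T)-C(\tau)\right|$, the splitting into a running-cost term and a terminal term, and the observation that the estimate \eqref{eq:uniform.tau} in the proof of Lemma~\ref{lm:convergence_terminal} is uniform over all pairs of stopping times at distance at most $a$ (the paper makes exactly this uniformity remark when proving \eqref{eg:down.rc}); the only difference is that you use this single estimate for both one-sided limits, whereas the paper handles left-continuity of $T\mapsto\overline w_T(x)$ separately via quasi-left continuity of $(X_t)$ and bounded convergence. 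Where you genuinely depart from the paper is the continuity of $T\mapsto\underline w_T(x)$: the paper shifts an $\vep$-optimal stopping time for $\underline w_{T-u}$ forward by $u$ and again invokes the Lemma~\ref{lm:convergence_terminal}-type estimate (for left-continuity) together with right-continuity of paths and bounded convergence (for right-continuity). Your horizon-extension map $\Phi(\sigma)=\sigma 1_{\{\sigma<T\}}+T'1_{\{\sigma=T\}}$ exploits the fact that the indicator $1_{\{\tau<T\}}$ kills the terminal cost exactly on the event where the two horizons disagree, so the entire discrepancy reduces to the deterministic bound $e^{\Vert g\Vert T'}\left(e^{\Vert g\Vert (T'-T)}-1\right)$; this needs no path regularity, no $\vep$-optimal selections, and yields an explicit modulus of (uniform) continuity on bounded intervals, uniformly in $x$ as well. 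One small point worth recording if you write this up: verify that $\Phi(\sigma)$ is indeed a stopping time (for $t\in[T,T')$ one has $\{\Phi(\sigma)\leq t\}=\{\sigma<T\}\in\cF_T\subset\cF_t$), which is immediate but should be said.
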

\begin{proof}
First, we prove monotonic properties of $T\mapsto \underline w_{T}(x)$  and $T\mapsto \overline w_{T}(x)$.  Let $T,u\geq 0$ and let $\tau_{\vep}\leq T$ be an $\vep$-optimal stopping time for $\underline{w}_{T}(x)$. Then, using the fact that $g,G\geq 0$ we get
\begin{align*}
\underline{w}_{T-u}(x) & \leq \E_x\left[e^{\int_0^{\tau_{\vep}\wedge (T-u)} g(X_s)ds + 1_{\{\tau_{\vep} <T-u\}}G(X_{\tau_{\vep}})}\right]\\
&  \leq \E_x\left[e^{\int_0^{\tau_{\vep}} g(X_s)ds + 1_{\{\tau_{\vep} <T\}}G(X_{\tau_{\vep}})}\right] \leq \underline{w}_{T}(x) +\epsilon.
\end{align*}
Letting $\vep\to 0$, we conclude that $T\mapsto \underline w_{T}(x)$ is non-decreasing. The proof of monotonicity of $T\mapsto \overline{w}_T(x)$ is straightforward and omitted for brevity.

Second, we show the continuity property; we start with the proof of left-continuity of $T\mapsto \underline w_{T}(x)$. Let $\vep>0$. For any $u>0$, let $\tau_{\vep}^u\leq T-u$ be an $\vep$-optimal stopping time for $\underline{w}_{T-u}(x)$ and let 
\begin{alignat}{3}
\underline{z}_T(t) & :=e^{\int_0^{t\wedge T} g(X_s) ds + 1_{\{t<T\}} G(X_{t})},& \quad t &\geq 0.
\end{alignat}
Since $T\mapsto \underline w_{T}(x)$ is non-decreasing and $\underline{w}_T(x)\leq \E_x\left[ \underline{z}_T(\tau_{\vep}^u+u)\right]$, we get
\begin{align}\label{eq:down_uniform}
0\leq \underline{w}_T(x)-\underline{w}_{T-u}(x) & \leq \E_x\left| \underline{z}_T(\tau_{\vep}^u+u)-\underline{z}_{T-u}(\tau_{\vep}^u)\right|+\vep \nonumber\\
&\leq \sup_{\tau\leq T}\E_x\left| \underline{z}_T(\tau+u)-\underline{z}_{T-u}(\tau)\right|+\vep.
\end{align}
Now, let us show that, for any $T>0$ and $x \in E$, we get
\begin{equation}\label{eg:down.rc}
 \sup_{\tau\leq T} E_x\left| \underline{z}_T(\tau+u) -\underline{z}_{T-u}(\tau)\right|\to 0,\quad u\to 0.
\end{equation}
For any $x\in E$ and $\tau\leq T$ we get
\begin{align*}\label{eq:lm:boundedness_z_t_z_t-u:eq1}
\E_x\left| \underline{z}_T(\tau+u) -\underline{z}_{T-u}(\tau)\right| = & \E_x\left| e^{\int_0^{(\tau+u)\wedge T} g(X_s)ds} \left( e^{1_{\{\tau+u<T\}}G(X_{\tau+u})}-e^{1_{\{\tau<T-u\}}G(X_{\tau})}\right) \right. \nonumber \\
& \left. + e^{1_{\{\tau<T-u\}}G(X_\tau)+\int_0^{\tau \wedge (T-u)} g(X_s)ds}\left(e^{\int_{\tau \wedge (T-u)}^{(\tau+u)\wedge T} g(X_s)ds}-1\right)  \right| \nonumber \\
\leq & e^{T\Vert g \Vert}\E_x \left|e^{G(X_{\tau+u})}-e^{G(X_{\tau})}\right| +e^{\Vert G\Vert+T\Vert g\Vert}\left(e^{u\Vert g \Vert} -1\right).
\end{align*}
Recalling the proof of Lemma~\ref{lm:convergence_terminal} and noting that the upper bound~\eqref{eq:uniform.tau} depends only on the underlying sequence $(a_n)$, which in our case could be expressed through $u$, we get
\[
\sup_{\tau\leq T}\E_x \left|e^{G(X_{\tau+u})}-e^{G(X_{\tau})}\right|\to 0,\quad u\to 0.
\]
Consequently, since $e^{u\|g\|}-1\to 0$ as $u\to 0$, we conclude the proof of~\eqref{eg:down.rc}. As the choice of $\epsilon$ was arbitrary, we get left continuity of $T\mapsto \underline w_{T}(x)$. 

Next, let us show right-continuity of $T\mapsto \underline w_{T}(x)$. As in the first part of the proof, let $\tau_{\epsilon}\leq T$ be an $\vep$-optimal stopping time for $\underline{w}_T(x).$  Using monotonicity of $\underline w_{T}$, and boundedness of $g$ and $G$, we get
\begin{align}
\underline{w}_T(x) \leq \lim_{u\downarrow 0}\underline{w}_{T+u}(x) & \leq \lim_{u\downarrow 0} \E_x\left[e^{\int_0^{\tau_\vep+u} g(X_s)ds + 1_{\{\tau_\vep+u<T+u\}}G(X_{\tau_\vep+u})}\right] \nonumber \nonumber\\
& = \E_x\left[e^{\int_0^{\tau_\vep} g(X_s)ds + 1_{\{\tau_\vep<T\}}G(X_{\tau_\vep})}\right]\leq \underline{w}_T(x)+\vep;\label{eq:w_down_left_final}
\end{align}
note in the second line we used bounded convergence theorem and the fact that $(X_t)$ is right continuous. Letting $\vep\to 0$ we get right continuity of $T \to \underline{w}_T(x)$, for any $x\in E$.

The proof of continuity of $T\mapsto\overline{w}_T(x)$ is similar to the proof for $T\mapsto\underline{w}_T(x)$. For brevity, we only present an outline. Setting $\overline{z}_T(t)  :=e^{\int_0^{t\wedge T} g(X_s) ds + G(X_{t\wedge T})}, \, t\geq 0$ and using argument leading to~\eqref{eq:down_uniform}, for any $\varepsilon>0$, we get
\begin{align*}
0\leq \overline{w}_T(x)-\overline{w}_{T+u}(x) & \leq \sup_{\tau\leq T+u}\E_x\left| \overline{z}_T(\tau)- \overline{z}_{T+u}(\tau)\right|+\vep.
\end{align*}
By arguments similar to the ones used in the proof of~\eqref{eg:down.rc} we get right-continuity of $T\mapsto\underline{w}_T(x)$. Left-continuity could be obtained using the same reasoning as in \eqref{eq:w_down_left_final}, with $\underline{w}_{T+u}$ replaced by 
$\overline{w}_{T-u}$; note that quasi-left continuity of $(X_t)$ is required here.
\end{proof}
Next, we focus on the maps $x\mapsto \underline w_{T}(x)$ and $x\mapsto \overline w_{T}(x)$, defined for any fixed $T\geq 0$.
\begin{proposition}\label{prop:continuity_w_T}
Let $(\underline w_T)$ and $(\overline w_T)$ be given by~\eqref{eq:w^t_approx_from_below} and~\eqref{eq:w^T_approx_from_above}. Then,
\begin{enumerate}
\item For any $T\geq 0$, the function $x\mapsto \underline{w}_T(x)$ is continuous and bounded. Moreover, the optimal stopping time for $\underline w_T$ is given by
\begin{equation}\label{optstop}
\underline{\tau}_T:=\inf\left\{t\geq 0: \underline{w}_{T-t}(X_t)=e^{G(X_t)}\right\}\wedge T.
\end{equation}
\item For any $T\geq 0$, the function $x\mapsto \overline{w}_T(x)$ is continuous and bounded. Moreover, the optimal stopping time for $\overline w_T$ is given by
\begin{equation}\label{eq:prop:coninuity_w_T:optstop}
\overline{\tau}_T:=\inf\left\{t\geq 0:\overline{w}_{T-t}(X_t)=e^{G(X_t)}\right\}.
\end{equation}
\end{enumerate}
\end{proposition}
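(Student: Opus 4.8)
The plan is to transfer the discrete-time results of Section~\ref{S:discrete_stopping} into the continuous-time setting by discretizing time and passing to the limit, while using the uniform continuity estimates established in Lemma~\ref{lm:convergence_terminal} and Proposition~\ref{pr:w.time} to control the errors uniformly in the starting point. Since the two parts are analogous, I would treat $\underline w_T$ in detail and only indicate the modifications for $\overline w_T$. The boundedness is immediate: from $g,G\geq 0$ and boundedness of $g,G$ we have $1\leq \underline w_T(x)\leq e^{\Vert G\Vert + T\Vert g\Vert}$.

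For continuity in $x$, first I would fix $m\in\bN$ and restrict attention to the dyadic grid, defining $\underline w_T^{(m)}$ as the value obtained by minimizing only over stopping times in $\cT_m$ with values in $\{0,\delta_m,\ldots\}$ truncated at $T$. By the discrete-time theory (Proposition~\ref{pr:w} and Theorem~\ref{cor1}, applied with time step $\delta_m$ and running cost integrated over each dyadic interval), each $\underline w_T^{(m)}$ is continuous and bounded in $x$, and admits the dyadic Bellman recursion built from the operator $P_{\delta_m}^g$ of~\eqref{cont1}, which preserves continuity by part~4 of Proposition~\ref{pr:c0.feller}. The key step is then to show $\underline w_T^{(m)}\to \underline w_T$ uniformly on compact sets as $m\to\infty$: given an $\vep$-optimal stopping time $\tau$ for $\underline w_T(x)$, its dyadic approximation $\tau_m:=\delta_m\lceil \tau/\delta_m\rceil\wedge T$ satisfies $0\leq\tau_m-\tau\leq\delta_m$, and Lemma~\ref{lm:convergence_terminal} together with the elementary bound $e^{\int_0^{\tau_m}g\,ds}-e^{\int_0^{\tau}g\,ds}\leq e^{\Vert G\Vert+T\Vert g\Vert}(e^{\delta_m\Vert g\Vert}-1)$ controls $|\underline z_T(\tau_m)-\underline z_T(\tau)|$ uniformly over $x$ in a compact set $\Gamma$. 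A uniform limit of continuous functions being continuous, this yields continuity of $x\mapsto\underline w_T(x)$.

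For the optimality of $\underline\tau_T$ defined in~\eqref{optstop}, I would argue via the continuous-time dynamic programming / martingale characterization. Using continuity of $\underline w_T$ in both $x$ (just proved) and $T$ (Proposition~\ref{pr:w.time}), the stopping region $\{(t,x): \underline w_{T-t}(x)=e^{G(x)}\}$ is closed, so $\underline\tau_T$ is a well-defined stopping time by right-continuity of $(X_t)$. The plan is to show that the discounted value process $M_t:=e^{\int_0^{t\wedge T}g(X_s)ds}\,\underline w_{T-t\wedge T}(X_{t\wedge T})$ is a submartingale that becomes a martingale up to $\underline\tau_T$; this follows from the Bellman/superharmonicity relation $e^{g(x)\,dt}P_{dt}\underline w_{T-t}\geq \underline w_{T-t-dt}$ inherited in the limit $m\to\infty$ from the dyadic recursions, together with the fact that on the continuation region the minimum in the Bellman equation is attained by the continuation term $e^{g}P\underline w$. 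Optional sampling at $\underline\tau_T$, combined with the terminal identity $\underline w_0=1$ and $\underline w_{T-\underline\tau_T}(X_{\underline\tau_T})=e^{G(X_{\underline\tau_T})}$ on $\{\underline\tau_T<T\}$, then gives $\underline w_T(x)=\E_x[\underline z_T(\underline\tau_T)]$, proving optimality.

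The main obstacle will be the rigorous passage from the dyadic Bellman recursions to the continuous-time martingale property, i.e. establishing that $M_t$ is indeed a submartingale and a martingale up to $\underline\tau_T$. The subtlety is that the Bellman relation holds exactly only along each dyadic grid, and upgrading it to all $t\geq 0$ requires the joint continuity of $(t,x)\mapsto\underline w_{T-t}(x)$ together with the semigroup property $P_{\delta_m}^g=(P_{\delta_{m+1}}^g)^2$ and the uniform convergence on compacts from the previous step. The integrability needed for optional sampling is automatic here because $\tau\leq T$ is bounded and all integrands are dominated by $e^{\Vert G\Vert+T\Vert g\Vert}$. For $\overline w_T$ the argument is entirely parallel, with the truncation $\tau_m:=\delta_m\lceil\tau/\delta_m\rceil\wedge T$ replaced by the appropriate grid approximation and the terminal cost $G(X_\tau)$ applied at every stopping time rather than only on $\{\tau<T\}$; here one additionally invokes quasi-left-continuity of $(X_t)$, exactly as in the continuity proof of Proposition~\ref{pr:w.time}, to handle the approximation of $\overline\tau_T$.
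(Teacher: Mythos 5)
Your treatment of boundedness and of continuity in $x$ (your Steps 1--2) follows essentially the same route as the paper: discretize the admissible stopping times to a dyadic grid, obtain continuity of the discretized value from the backward recursion built on the operator $P^{g}_{\delta}$ and property \eqref{cont1}, and pass to the limit uniformly on compact sets via an $\vep$-optimal stopping time, its dyadic majorant, and Lemma~\ref{lm:convergence_terminal}. One imprecision there: for $\underline w_T$ the two-sided quantity $\E_x\left|\underline z_T(\tau_m)-\underline z_T(\tau)\right|$ need \emph{not} tend to $0$, because on the event $\{\tau<T\leq\tau_m\}$ the terminal indicator flips and the two terms differ by roughly $e^{G(X_\tau)}-1$. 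Only the one-sided estimate is small --- on that event $\underline z_T(\tau_m)-\underline z_T(\tau)\leq e^{T\Vert g\Vert}\left(e^{\delta_m\Vert g\Vert}-1\right)$ precisely because $G\geq 0$ --- and that is all you need, since the dyadic value dominates $\underline w_T$. The paper's display \eqref{cal1} is arranged exactly to exploit this one-sidedness; as written, your claim about the absolute difference is false but easily repaired.

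The genuine gap is in your Step 3 (optimality of $\underline\tau_T$). You correctly flag the obstacle --- upgrading the dyadic Bellman recursions to the statement that the stopped process $M_{t\wedge\underline\tau_T}$ is a martingale --- but the ingredients you list (joint continuity, the semigroup identity, uniform convergence on compacts) do not close it. The dyadic recursion gives the martingale property of the discretized value process only up to the first \emph{dyadic} entry time into the stopping region of the \emph{discretized} value $\underline w^{(m)}_{T-\cdot}$; these stopping times are neither monotone in $m$ nor obviously convergent to $\underline\tau_T$, since the dyadic stopping regions $\{\underline w^{(m)}_{T-t}=e^{G}\}$ shrink as $m$ grows (the dyadic values decrease to $\underline w_{T-t}$) while the process may exit the limiting region between consecutive grid points. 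Likewise, the heuristic that ``on the continuation region the minimum is attained by the continuation term'' has no single-step meaning in continuous time: what is needed is $M_{t}=\E_x\left[M_{(t+h)\wedge\underline\tau_T}\,\middle|\,\cF_t\right]$ for all $h>0$, which is exactly the Snell-envelope characterization you are trying to prove, so the argument is circular as sketched. The paper avoids this by invoking Fakeev's theory: $\underline v_T(t)=e^{\int_0^t g(X_s)ds}\underline w_{T-t}(X_t)$ is the Snell envelope of $\underline z_T$, and \cite[Theorem 4]{Fak1970} supplies $\vep$-optimal stopping times $\underline\tau^{\vep}_T$ that are monotone in $\vep$; this monotonicity makes the pathwise limit $\hat{\underline\tau}_T$ exist, Fatou plus quasi-left continuity show it is optimal, and a separate squeeze ($\underline\tau^{\vep}_T\leq\underline\tau_T\leq\hat{\underline\tau}_T$) identifies it with \eqref{optstop}. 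To make your route rigorous you would have to either reprove this piece of the general theory or give a genuine argument for convergence of the dyadic optimal stopping times; as it stands, the optimality claim is not established.
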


\begin{proof} 
The idea of the proof is to approximate the problem by its discrete time analogue. In the first part of the proof we focus on map $x\mapsto \underline{w}_T(x)$. For any $m\in\bN$ and $T\geq 0$, we set
\begin{equation}\label{eq:1step.1}
\underline{w}_T^m(x):=\inf_{ \tau \in {\cal T}_T^m} \E_x\left[ e^{\int_{0}^{\tau} g(X_s)ds+1_{\tau<T}G(X_\tau)}\right],\quad x\in E,
\end{equation}
where ${\cal T}^m_T$ is the family of stopping times taking values in $\left[0,{\tfrac{T}{2^m}},\tfrac{2T}{2^m}, \ldots,T\right]$. For transparency, we split the proof into three steps: (1) proof of continuity of $x\mapsto \underline{w}_T^m(x)$; (2) proof of continuity of $x\mapsto \underline{w}_T(x)$ using discrete time approximations; (3) proof of optimality of~\eqref{optstop}.

\medskip
\noindent {\it Step 1.} Continuity of $x\mapsto \underline{w}_T^m(x)$. Let us fix $m\in\bN$. Using operator $P^g_{\frac{T}{2^m}}$ defined in \eqref{cont1}, we consider the recursive sequence of functions
\begin{align*}
\widetilde{w}_T^0(x) & :=1,\\
\widetilde{w}_T^{j}(x) & := P_{\frac{T}{2^m}}^g \widetilde{w}_T^{j-1}(x)\wedge e^{G(x)}, \quad j=1, \ldots, 2^m.
\end{align*}
By property~\eqref{cont1} the function $\widetilde{w}_T^{j}$ is continuous, for $j=1,\ldots,2^m$. Using standard arguments (see e.g. \cite[Section 2.2]{Shi1978}) one can show that  $\underline{w}_T^m=\widetilde{w}_T^{2^m}$, which implies continuity of $x\mapsto \underline{w}_T^m(x)$.

\medskip
\noindent {\it Step 2.} Continuity of $x\mapsto \underline{w}_T(x)$. We show that $x\mapsto \underline{w}_T(x)$ could be approximated uniformly on compact sets by $x\mapsto \underline{w}_T^m(x)$, as $m\to\infty$. Let $\vep>0$ and $\tau_\vep\leq T$ be an $\vep$-optimal stopping time for $\underline{w}_T$. For any $m\in\bN$, we set
\[
\tau_{\vep}^m:=\inf\{\tau\in \cT_T^m \colon \tau\geq \tau_{\vep}\}= \textstyle \sum_{j=1}^{2^m}1_{\left\{\frac{T}{2^m}(j-1)< \tau_{\vep}\leq \frac{T}{2^m}j\right\}}\frac{T}{2^m}j.
\]
Noting that $\tau_{\vep}^m\leq T$, for any $x\in E$, we get
\begin{align}\label{cal1}
0&\leq  \underline{w}_T^m(x)-\underline{w}_T(x) \nonumber\\ 
&\leq  \E_x\left[ e^{\int_{0}^{\tau_{\vep}^m} g(X_s)ds+1_{\{\tau_{\vep}^m<T\}}G(X_{\tau_{\vep}^m})}\right] -\E_x\left[ e^{\int_{0}^{\tau_{\vep}} g(X_s)ds+1_{\{\tau_{\vep}<T\}}G(X_{\tau_{\vep}})}\right]+\vep \nonumber \\
&=  \E_x\left[ e^{\int_{0}^{\tau_{\vep}} g(X_s)ds}\left(e^{\int_{\tau_{\vep}}^{\tau_{\vep}^m} g(X_s)ds}-1\right)e^{1_{\{\tau_{\vep}^m<T\}}G(X_{\tau_{\vep}^m})}  \right] \nonumber \\
&\phantom{=} + \E_x\left[ e^{\int_{0}^{\tau_{\vep}} g(X_s)ds}\left(e^{1_{\{\tau_{\vep}^m<T\}}G(X_{\tau_{\vep}^m})}-e^{1_{\{\tau_{\vep}<T\}}G(X_{\tau_{\vep}})}\right)\right]+\vep  \nonumber \\
&\leq  e^{T\Vert g\Vert}\left(e^{\Vert G\Vert}\left(e^{\frac{T}{2^m}\Vert g\Vert}-1\right)+\E_x\left[e^{1_{\{\tau_{\vep}^m<T\}}G(X_{\tau_{\vep}^m})}-e^{1_{\{\tau_{\vep}<T\}}G(X_{\tau_{\vep}})}\right]\right)+\vep.
\end{align}
Noting that $\tau_{\vep}\leq \tau_{\vep}^m$ and recalling Lemma~\ref{lm:convergence_terminal} we get 
\begin{align*}
\E_x\left[e^{1_{\{\tau_{\vep}^m<T\}}G(X_{\tau_{\vep}^m})}-e^{1_{\{\tau_{\vep}<T\}}G(X_{\tau_{\vep}})}\right] &\leq \E_x\left[1_{\{\tau_{\vep}<T\}}\left(e^{G(X_{\tau_{\vep}^m})}-e^{G(X_{\tau_{\vep}})}\right)\right]\\
&\leq \E_x\left|e^{G(X_{\tau_{\vep}^m})}-e^{G(X_{\tau_{\vep}})}\right|\to 0, \quad m\to\infty.
\end{align*}
Also, the convergence is uniform (in $x$) on compact sets. Consequently, letting $\epsilon\to 0$, using \eqref{cal1}, and the fact that $\underline{w}_T^m$ is continuous, we get continuity of $\underline{w}_T$.

\medskip
\noindent {\it Step 3.} Optimality of~\eqref{optstop}. First, from continuity of $x\mapsto\underline{w}_T(x)$, for any $T\geq 0$, and Proposition~\eqref{pr:w.time} we get
\begin{equation}\label{eq:th:w^t:conv_t_x}
|\underline{w}_{T_n}(x_n)-\underline{w}_T(x)|\to 0, \quad n\to\infty,
\end{equation}
where $(T_n)\subset \mathbb{R}_+$ is a monotone and such that $T_n\to T$, and $(x_n)\subset E$ is such that $x_n\to x\in E$; this follows from Dini's theorem, as the convergence of $\underline{w}_{T_n}$ to $\underline{w}_T$ is uniform on compact sets. Second, for $t\in [0,T]$, we define
\begin{align*}
\underline{v}_T(t) &:=\underline{w}_{T-t}(X_t)e^{\int_0^t g(X_s)ds},\\
\underline{z}_T(t) &:=e^{\int_0^{t} g(X_s)ds + 1_{\{t<T\}}G(X_{t})}.
\end{align*}
Using techniques from \cite{Fak1971} one can show that $\underline{v}_T$ is the Snell envelope of $\underline{z}_T$. Hence, using \cite[Theorem 4]{Fak1970}, we get that
\begin{equation*}
\underline\tau^\vep_T:=\inf\left\{t\geq 0:  \underline{w}_{T-t}(X_t)e^{\int_0^t g(X_s)ds}\geq -\vep + \underline{z}_T(t)\right\}
\end{equation*}
is an $\vep$-optimal stopping time for $\underline{w}_T$. Thus, setting
\begin{equation}\label{eq:def:tau.nth}
{\hat{\underline\tau}}^\vep_T:=\inf\left\{t\geq 0:  \underline{w}_{T-t}(X_t)\geq (-\vep)\cdot e^{-\int_0^t g(X_s)ds} +e^{G(X_t)}\right\},
\end{equation}
we get $\underline\tau^\vep_T={\hat{\underline\tau}}^\vep_T\wedge T$. Now, noting that ${\hat{\underline\tau}}^{\vep_1}_T\geq {\hat{\underline\tau}}^{\vep_2}_T$, whenever $0\leq \vep_1\leq \vep_2$, we can define 
\[
{\hat{\underline\tau}}_T:=\lim_{\vep \downarrow 0} {\hat{\underline\tau}}^{\vep}_T\wedge T=\lim_{\vep \downarrow 0} \underline\tau^\vep_T.
\]
Using Fatou Lemma and quasi left continuity of $(X_t)$ we get
\begin{align}
\lim_{\vep \to 0} (\underline{w}_T(x)+\vep) & \geq \liminf_{\vep \to 0} \E_x \left[ e^{\int_0^{\underline\tau^\vep_T} g(X_s)ds + 1_{\{\underline\tau^\vep_T<T\}}G(X_{\underline\tau^\vep_T})} \right] \nonumber\\
& \geq \E_x\left[ e^{\int_0^{\hat{\underline\tau}_T} g(X_s)ds + 1_{\{\hat{\underline\tau}_T<T\}}G(X_{\hat{\underline\tau}_T})}\right] \nonumber \\
&\geq \underline{w}_T(x).\label{eq:part1.22}
\end{align}
Consequently, $\hat{\underline\tau}_T$ is the optimal stopping time for $\underline{w}_T(x)$. Finally, let us show that $\underline{\tau}_T=\hat{\underline\tau}_T$, where $\underline{\tau}_T$ is given by~\eqref{optstop}. Let $\epsilon>0$. On the set $\{{\hat{\underline\tau}}^\vep_T<T\},$ recalling definition \eqref{eq:def:tau.nth}, property \eqref{eq:th:w^t:conv_t_x}, continuity of $G$, and right-continuity of $(X_t)$, we get
\begin{equation}\label{eq:calc2}
\underline{w}_{T-{\hat{\underline\tau}}^\vep_T}\left(X_{{\hat{\underline\tau}}^\vep_T}\right)\geq (-\vep) \cdot e^{-\int_0^{{\hat{\underline\tau}}^\vep_T}g(X_s)ds}+e^{G(X_{{\hat{\underline\tau}}^\vep_T})}.
\end{equation}
Thus, on the set $\{\underline{\hat\tau}_T<T\}$, using~\eqref{eq:th:w^t:conv_t_x} and letting $\vep\downarrow 0$ in~\eqref{eq:calc2}, we get
\[
\underline{w}_{T-\hat\tau_T}(X_{\hat\tau_T})\geq e^{G(X_{\hat\tau_T})}.
\]
Since $\underline{w}_T(x)\leq e^{G(x)}$, for any $x\in E$ and $T>0$, on the set $\{\underline{\hat\tau}_T<T\}$ we also get
\begin{equation*}\label{eq:th:w^t:hattau}
\underline{w}_{T-\underline{\hat\tau}_T}(X_{\underline{\hat\tau}_T})= e^{G(X_{\underline{\hat\tau}_T})}.
\end{equation*}
Recalling definition of $\underline{\tau}_T$, we get $\underline{\tau}_T\leq \underline{\hat\tau}_T$. Finally, noting that $\underline\tau^\vep_T\leq  \underline{\tau}_T$, for any $\vep>0$, and letting $\vep\to 0$, we get $\underline{\tau}_T=\underline{\hat\tau}_T$ for $\underline{\tau}_T$ being the optimal stopping time. This concludes the first part of the proof.

The second part of the proof, i.e. the argument for $x\mapsto \overline{w}_T(x)$, is similar to the proof for $x\mapsto \underline{w}_T(x)$. For brevity, we only present an outline. In analogy to \eqref{eq:1step.1} we define
\begin{equation*}
\overline{w}_T^m(x):=\inf_{\tau\in {\cal T}^m_T} \E_x\left[e^{\int_0^{\tau} g(X_s)ds+G(X_{\tau})}\right], \quad m\in \mathbb{N}.
\end{equation*}
Using techniques presented in the first part of the proof, one could show that the map $x\mapsto \underline{w}_T^m(x)$ is continuous (Step 1) and use it to show continuity of $x\mapsto \underline{w}_T(x)$ (Step~2). Indeed, it is sufficient to combine  Lemma~\ref{cont1} with
\begin{align*}
0 &\leq  \overline{w}_T^m(x) - \overline{w}_T(x)\leq  e^{T \|g\|}\left((e^{{\frac{T}{2^m}}\|g\|}-1)e^{\|G\|} +\E_x\left|e^{G(X_{\tau_{\vep}^m})}-e^{G(X_{\tau_{\vep}})}\right|\right)+\vep,
\end{align*}
where $\tau_\vep$ is an $\epsilon$-optimal stopping time and  $\tau_{\vep}^m$ is the (upper) $\cT^m_T$-dyadic approximation of $\tau_{\vep}^m$. To prove equivalent of Step 3, we consider $\overline{z}_T(t) :=e^{\int_0^{t} g(X_s)ds+G(X_{t})}$, and $\overline{v}_T(t):=\overline{w}_{T-t}(X_{t})e^{\int_0^{t} g(X_s)ds}$, defined for $t\in [0,T]$. Using arguments similar as in Step 3, we get that $\overline\tau_T^{\vep}  :=\inf\{t\geq 0: \overline{w}_{T-t}(X_{t})\geq -\vep e^{-\int_0^{t} g(X_s)ds} + e^{G(X_{t})}\}$ is an $\vep$-optimal stopping time for $\overline{w}_T(x)$, and we can define $\hat{\overline\tau}_T:=\lim_{\vep\to 0}\overline\tau_T^{\vep}$. Next, using similar reasoning as in \eqref{eq:part1.22}, we can show that $\hat{\overline\tau}_T$ is optimal for $\overline{w}_T(x)$. Finally, the proof of $\overline{\tau}_T=\hat{\overline\tau}_T$ is similar to the proof of $\underline{\tau}_T=\hat{\underline\tau}_T$ provided in Step 3.
\end{proof}
\begin{remark}\label{rm:joint_continuity_w_T(x)}
Observe that in Proposition~\ref{prop:continuity_w_T} joint continuity of $(T,x)\mapsto \underline{w}_T(x)$ and $(T,x)\mapsto \overline{w}_T(x)$ 
was shown; see~\eqref{eq:th:w^t:conv_t_x}.
\end{remark}

\begin{remark}\label{rm:g_G_general}
For $\overline{w}_T(x)$, the statements of Proposition~\ref{pr:w.time} and Proposition~\ref{prop:continuity_w_T} remain valid even if we consider generic (possibly negative) functions $\tilde g, \tilde G\in C(E)$ instead of $g$ and $G$. For $G$, it is enough to note that we can multiply both sides of \eqref{eq:w^T_approx_from_above} by $e^{\Vert\tilde G\Vert}$. For $g$, non-negativity was used only to show that for any $\tau \in \mathcal{T}$ we get $\E_x\left| e^{\int_\tau^{\tau+h} g(X_s) ds}-1\right|\to 0$ as $h\downarrow 0$. Noting that $|e^z-e^y|\leq e^{\max(z,y)}|z-y|$, $z,y\in \mathbb{R}$, for generic $\tilde g$ we get $\E_x\left| e^{\int_\tau^{\tau+h} \tilde g(X_s) ds}-1\right|\leq e^{h\Vert \tilde g\Vert} h\Vert \tilde g\Vert\to 0$, as $h\downarrow 0$.
\end{remark}

Next, in analogy to~\eqref{eq.w.both}, we define the limits
\begin{equation}\label{eq:w.both2}
\underline w(x):=\lim_{T\to\infty}\underline w_{T}(x)\quad\textrm{and}\quad\overline w(x) :=\lim_{T\to\infty}\overline w_{T}(x);
\end{equation}
note that due to Proposition~\ref{pr:w.time} those functions are well defined. Let us outline some properties of $\underline{w}$ and $\overline{w}$.

\begin{proposition}\label{pr:w.both} Let $\underline w$ and $\overline w$ be given by~\eqref{eq:w.both2}. Then,
\begin{enumerate}
\item The function $\underline w$ is lower semicontinuous and $\underline w(x)\leq w(x)$, for $x\in E$.
\item The function $\overline w$ is upper semicontinuous and  $\overline w(x)\geq w(x)$, for $x\in E$.
\end{enumerate}
\end{proposition}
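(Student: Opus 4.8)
The plan is to transfer the discrete-time argument of Proposition~\ref{pr:w} to the continuous-time setting, using the monotonicity in $T$ from Proposition~\ref{pr:w.time} together with the fixed-horizon spatial continuity from Proposition~\ref{prop:continuity_w_T}. The key observation is that, because $T\mapsto \underline w_T(x)$ is non-decreasing, the defining limit is in fact a pointwise supremum, $\underline w(x)=\sup_{T\geq 0}\underline w_T(x)$; dually, because $T\mapsto \overline w_T(x)$ is non-increasing, $\overline w(x)=\inf_{T\geq 0}\overline w_T(x)$.

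For part (1) I would invoke Proposition~\ref{prop:continuity_w_T} to the effect that each map $x\mapsto \underline w_T(x)$ is continuous. Since a pointwise supremum of continuous functions is lower semicontinuous, it follows that $\underline w$ is lower semicontinuous. For the inequality, I would pass to the limit $T\to\infty$ in the sandwich bound $\underline w_T(x)\leq w(x)$, which holds for every $T\geq 0$ (as noted just before the statement), obtaining $\underline w(x)\leq w(x)$. Part (2) is entirely symmetric: each $x\mapsto \overline w_T(x)$ is continuous by Proposition~\ref{prop:continuity_w_T}, a pointwise infimum of continuous functions is upper semicontinuous, so $\overline w$ is upper semicontinuous, and letting $T\to\infty$ in $\overline w_T(x)\geq w(x)$ yields $\overline w(x)\geq w(x)$.

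I do not anticipate a genuine analytic obstacle, since all the hard estimates have already been carried out in the preceding propositions. The only point requiring care is to match the direction of monotonicity with the correct notion of semicontinuity---suprema of continuous functions are lower semicontinuous while infima are upper semicontinuous---so it is essential that the limits be read as suprema and infima via Proposition~\ref{pr:w.time} rather than as unstructured pointwise limits.
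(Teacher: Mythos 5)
Your proposal is correct and follows essentially the same route as the paper: both arguments combine the monotonicity in $T$ from Proposition~\ref{pr:w.time} with the continuity of $x\mapsto \underline w_T(x)$ and $x\mapsto \overline w_T(x)$ from Proposition~\ref{prop:continuity_w_T} to conclude that a monotone (equivalently, supremum or infimum) limit of continuous functions is lower, respectively upper, semicontinuous, and the inequalities follow by letting $T\to\infty$ in the sandwich bound $\underline w_T(x)\leq w(x)\leq \overline w_T(x)$.
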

\begin{proof}
Using Proposition~\ref{pr:w.time} and Proposition~\ref{prop:continuity_w_T}, we get that $\underline{w}$ is a non-decreasing limit of continuous functions $\underline{w}_T$, hence it is is lower semicontinuous.  Similarly, $\overline{w}$ is upper semicontinuous as a non-increasing limit of continuous functions $\overline{w}_T$.
\end{proof}
In Theorem~\ref{th:w_continuity} we show that the maps defined by~\eqref{eq:w.both2} are equal to $w$, which is used to prove continuity of $w$.

\begin{theorem}\label{th:w_continuity}
The function $w$ defined in~\eqref{eq8} is continuous and bounded. Moreover, we get $\underline w\equiv w\equiv\overline w$, and the optimal stopping time is given by
\begin{equation}\label{eq:th:w_continuity:optstop}
\hat{\tau}:=\inf\left\{t\geq 0: w(X_t)=e^{G(X_t)}\right\}.
\end{equation}
\end{theorem}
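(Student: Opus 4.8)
The plan is to show the two inequalities $\overline{w}\le w\le \underline{w}$, which combined with the inequalities $\underline{w}\le w\le\overline{w}$ from Proposition~\ref{pr:w.both} force $\underline w\equiv w\equiv \overline w$; continuity then follows immediately, since $\underline w$ is lower semicontinuous and $\overline w$ is upper semicontinuous (Proposition~\ref{pr:w.both}), so their common value $w$ is both, hence continuous. Boundedness is clear since $1\le w(x)\le e^{\Vert G\Vert}$. Thus the crux is the single inequality $\overline{w}(x)\le\underline{w}(x)$ (the reverse being automatic), and I would try to obtain it by passing to the limit $T\to\infty$ in the relationship between the finite-horizon approximations and by exploiting the integrability estimate $\E_x[e^{c\tau}]<\infty$ that controlled the tails in the discrete-time Proposition~\ref{prop1}.

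First I would record the Bellman-type structure: following the discrete-time development, $\underline w$ and $\overline w$ should each solve the continuous-time analogue of the Bellman equation, namely $\underline w(x)=\E_x[e^{\int_0^{\sigma}g(X_s)ds}\underline w(X_\sigma)]\wedge e^{G(x)}$ for small deterministic horizons $\sigma$, and similarly for $\overline w$; this comes from Proposition~\ref{pr:w.time} together with the dynamic-programming / semigroup identity $\underline w_{T+\sigma}(x)=e^{g}$-weighted average of $\underline w_T$, capped at $e^{G(x)}$. Second, using the optimal stopping times $\underline\tau_T$ and $\overline\tau_T$ from Proposition~\ref{prop:continuity_w_T}, I would show that the stopping region $\{w=e^G\}$ is reached in finite expected time: because $g\ge c>0$, the weight $e^{\int_0^\tau g\,ds}\ge e^{c\tau}$ forces any reasonable candidate stopping time to satisfy an exponential-moment bound of the form $\E_x[e^{c\tau}]\le e^{\Vert G\Vert}$, exactly as in~\eqref{eq:prop1:integrability}. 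This uniform integrability is what lets the finite-horizon truncations close up in the limit.

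The key step is then the upper bound $\overline w\le w$ refined to $\overline w\le \underline w$. I would argue as in the discrete case: for the candidate $\hat\tau$ defined in~\eqref{eq:th:w_continuity:optstop}, build the stopped process $e^{\int_0^{\hat\tau\wedge T}g(X_s)ds}\underline w(X_{\hat\tau\wedge T})$, show it is a $\bP_x$-submartingale-to-martingale via the Bellman identity so that $\underline w(x)=\E_x[e^{\int_0^{\hat\tau\wedge T}g\,ds}\underline w(X_{\hat\tau\wedge T})]$, and then let $T\to\infty$. On $\{\hat\tau\le T\}$ the integrand collapses to the terminal payoff $e^{\int_0^{\hat\tau}g\,ds+G(X_{\hat\tau})}$, while the exponential moment bound plus dominated convergence (dominating by $e^{\int_0^{\hat\tau}g\,ds}e^{\Vert G\Vert}$, integrable by the $e^{c\hat\tau}$ estimate) kills the contribution of $\{\hat\tau>T\}$. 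This yields $\underline w(x)=\E_x[e^{\int_0^{\hat\tau}g\,ds+G(X_{\hat\tau})}]\ge w(x)$, and since $\hat\tau$ is an admissible stopping time the infimum is attained, giving equality $w(x)=\underline w(x)$ with $\hat\tau$ optimal; combined with $w\le\overline w\le\underline w$ this closes everything.

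The hard part will be the passage to the continuous-time limit in a way that legitimately identifies $\underline w$ as a genuine fixed point of the infinitesimal Bellman relation and justifies the optimal stopping time $\hat\tau$. Unlike the discrete case, where $S^n$ iterates cleanly, here one must control the interplay between $T\to\infty$ and the vanishing mesh, handle the measurability and finiteness of $\hat\tau$, and invoke quasi-left-continuity and right-continuity of $(X_t)$ at the stopping boundary (as was already needed in the proof of Proposition~\ref{prop:continuity_w_T} and Lemma~\ref{lm:convergence_terminal}) to pass limits through $e^{G(X_{\hat\tau})}$. I expect the martingale/optional-stopping argument for the continuous-time weighted process, together with verifying that the candidate $\hat\tau$ is almost surely finite using the strict lower bound $g\ge c$, to be the genuinely delicate step; the semicontinuity bookkeeping and the final continuity conclusion are then routine.
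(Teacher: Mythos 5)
Your high-level plan (sandwich $\underline w\le w\le\overline w$, prove the reverse inequalities, use $g\ge c$ for exponential moments, invoke right- and quasi-left-continuity at the stopping boundary) matches the paper's, and your Step for $\overline w\le w$ via the liminf/inf exchange is exactly what the paper does. However, the core of your argument for $w\le\underline w$ rests on two assertions that are not established and do not transfer from discrete time. First, the ``continuous-time Bellman identity'' $\underline w(x)=\E_x\left[e^{\int_0^{\sigma}g(X_s)ds}\,\underline w(X_\sigma)\right]\wedge e^{G(x)}$ for a deterministic $\sigma>0$ is false as stated: the dynamic programming principle for $\underline w_{T+\sigma}$ involves an infimum over all stopping times $\tau\le\sigma$, not merely the choice between stopping at $0$ and running to $\sigma$; the clean ``cap at $e^G$'' form is special to the one-step discrete operator $S$ where stopping is only allowed on the grid. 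Second, the martingale property of $e^{\int_0^{t\wedge\hat\tau}g\,ds}\,\underline w(X_{t\wedge\hat\tau})$ and the identification $\underline w(X_{\hat\tau})=e^{G(X_{\hat\tau})}$ presuppose regularity of $w$ and $\underline w$ (measurability of the debut $\hat\tau$, the value of $w$ at the hitting time via right-continuity) that is part of what the theorem is supposed to prove; your exponential-moment bound $\E_x[e^{c\hat\tau}]\le e^{\Vert G\Vert}$ is also derived from this unproven martingale identity, so the whole chain is circular.

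The paper circumvents precisely this difficulty by never working with an infinite-horizon Bellman equation or a martingale built from $\underline w$. For $w=\underline w$ it uses the finite-horizon optimizers $\underline\tau_T$ of Proposition~\ref{prop:continuity_w_T} (whose Snell-envelope structure is classical), the tail estimate $\bP_x[\underline\tau_n=n]\le e^{\Vert G\Vert-cn}$, and Borel--Cantelli to construct a limiting admissible stopping time $\underline\tau$ achieving value at most $\underline w(x)$, which closes the sandwich. For the optimality of $\hat\tau$ it passes the martingale property of the finite-horizon processes $\overline v_T(t\wedge\overline\tau_T)$ to the limit $T\to\infty$ via Dini's theorem (using the already-established continuity of $\overline w_T$ and of $w$) and quasi-left-continuity, and only then obtains $\E_x[e^{c\overline\tau}]\le w(x)$ and $w(X_{\overline\tau})=e^{G(X_{\overline\tau})}$. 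To repair your proof you would need to either justify the infinitesimal Bellman/martingale structure of $\underline w$ from scratch or, as the paper does, route everything through the finite-horizon objects.
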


\begin{proof}
The proof of boundedness of $w$ is straightforward and omitted for brevity.  For transparency, we split the rest of the argument into three steps: (1) proof of upper semicontinuity of $w$; (2) proof of lower semicontinuity of $w$; (3) proof of optimality of~\eqref{eq:th:w_continuity:optstop}.

\medskip
\noindent {\it Step 1.} Upper semicontinuity of $w$. Following similar reasoning as in the proof of Proposition~\ref{prop1} we get $w(x)= \overline{w}(x)$, for $x\in E$; cf.~\eqref{eq:ineq_overline_w}. Thus, recalling Proposition~\ref{pr:w.both}, we get that $w$ is upper semicontinuous

\medskip
\noindent {\it Step 2.}  Lower semicontinuity of $w$. It is sufficient to show that $w(x)= \underline{w}(x)$, $x\in E$, and use Proposition~\ref{pr:w.both}. For any $T>0$, let $\underline{\tau}_T$ be the optimal stopping time for $\underline{w}_T$, given by the formula~\eqref{optstop}. Define 
\[
{\hat{\underline\tau}}_T:=\inf\left\{t\geq 0: \underline{w}_{T-t}(X_t)\geq e^{G(X_t)}\right\}
\]
and observe that $\underline{\tau}_T={\hat{\underline\tau}}_T\wedge T$. Recalling that $g(\cdot)\geq c>0$ and $G(\cdot)\geq 0$, we get
\begin{equation*}
e^{\|G\|}\geq \underline{w}_{T}(x)=\E_x\left[e^{\int_0^{\underline{\tau}_T} g(X_s)ds + 1_{\{\underline{\tau}_T<T\}}G(X_{\underline{\tau}_T})}\right]\geq \E_x\left[1_{\{\underline{\tau}_T=T\}}\right]e^{cT}.
\end{equation*}
Consequently, considering only integer time-points, we get
\begin{equation}\label{eq:cl1}
\sum_{n=1}^\infty \bP_x\left[\underline{\tau}_{n}=n\right]\leq \sum_{n=1}^\infty{\frac{e^{\|G\|}}{e^{cn}}}<\infty.
\end{equation}
Moreover, by Proposition~\ref{pr:w.time}, we get ${\hat{\underline\tau}}_{n+1}\leq {\hat{\underline\tau}}_n$. Thus,  for any $n\in\bN$, on the set $\{{{\underline\tau}}_n<n\}$, we get ${\hat{\underline\tau}}_{n}={{\underline\tau}}_{n}$, ${\hat{\underline\tau}}_{n+1}={{\underline\tau}}_{n+1}$, and consequently ${{\underline\tau}}_{n+1}\leq {{\underline\tau}}_n$. Thus, by Borel-Cantelli Lemma applied to \eqref{eq:cl1}, we get that the limit
\[
\underline{\tau}:=\lim_{n\to \infty} 1_{\{{{\underline\tau}}_n<n\}}\underline{\tau}_n
\]
is well defined. Using right continuity of $(X_t)$, Fatou Lemma, and noting that $\underline{w}\leq w$, we get
\begin{align}\label{eq:th:w_continuity:ineq2}
\E_x\left[e^{\int_0^{\underline{\tau}} g(X_s)ds+G(X_{\underline{\tau}})}\right] & = \E_x\left[\lim_{n\to \infty}1_{\{{{\underline\tau}}_n<n\}} \left(e^{\int_0^{\underline{\tau}_n} g(X_s)ds+1_{\{\underline{\tau}_n<n\}}G(X_{\underline{\tau}_n})}\right)\right] \nonumber \\
& \leq \liminf_{n\to\infty} \underline{w}_n(x) = \underline{w}(x)\leq w(x)<\infty.
\end{align}
Thus, recalling convention~\eqref{eq:convention_stopping}, using bounded convergence theorem with quasi-left continuity of $(X_t)$, and~\eqref{eq:th:w_continuity:ineq2}, we get
\[
w(x)\leq \liminf_{T\to\infty} \E_x\left[e^{\int_0^{\underline{\tau}\wedge T} g(X_s)ds+G(X_{\underline{\tau}\wedge T})}\right] = \E_x\left[e^{\int_0^{\underline{\tau}} g(X_s)ds+G(X_{\underline{\tau}})}\right] \leq \underline{w}(x)\leq w(x),
\]
which implies $\underline{w}(x)=w(x)$, for $x\in E$. Using Proposition~\ref{pr:w.time} and Proposition~\ref{prop:continuity_w_T} we get that $w$ is lower semicontinuous as an increasing limit of continuous functions $\underline{w}_T$. 

\medskip
\noindent {\it Step 3.} Optimality of~\eqref{eq:th:w_continuity:optstop}. For fixed $T\geq0$ and $t\geq 0$, we define
\begin{align*}
\overline{v}_T(t) & :=\overline{w}_{T-t\wedge T}(X_{t\wedge T})e^{\int_0^{t\wedge T}g(X_s)ds},\\
v(t) & :=w(X_{t})e^{\int_0^{t}g(X_s)ds}.
\end{align*}
Note that for any fixed $t\geq 0$ we get that $\overline{v}_T(t)$ converges to $v(t)$ as $T\to\infty$; this follows from Dini's theorem, property $\overline{w}_T \downarrow w$ as $T\to \infty$, and continuity of $\overline{w}_T$ and $w$. Moreover, using standard results one can show that the process $(\overline{v}_T(t\wedge \overline{\tau}_T))$ is a martingale, where $\overline{\tau}_T$ is given by~\eqref{eq:prop:coninuity_w_T:optstop}; see e.g. Theorem 1 in \cite[Section 5.3]{Shi2019}. Since by Proposition~\ref{pr:w.time} the map $T\mapsto \overline{\tau}_T$ is increasing, we can set
\[
\overline{\tau}:=\lim_{T\to\infty}\overline{\tau}_T.
\]
Let us now show that the process $(v(t\wedge\overline{\tau}))_{t\geq 0}$ is a martingale.

By martingale property of $(\overline{v}_T(t\wedge \overline{\tau}_T))$ for any $t,h\geq 0$, we get
\begin{equation}\label{eq:th:w_continuity:condexp}
\bE_{x}\left[ \overline v_T((t+h)\wedge \overline\tau_T)|\cF_t\right] = \overline v_T(t\wedge \overline\tau_T).
\end{equation}
From Dini's theorem and quasi-left continuity of $(X_t)$ we get that
\[
\overline v_T((t+h)\wedge \overline\tau_T)\to  v((t+h)\wedge \overline\tau),\quad T\to\infty.
\]
Thus, letting $T\to \infty$ in~\eqref{eq:th:w_continuity:condexp} and using bounded convergence theorem, we get
\begin{equation}\label{eq:th:w_continuity:martingale1}
\bE_{x}\left[  v((t+h)\wedge \overline\tau)|\cF_t\right] =v(t\wedge \overline\tau),
\end{equation}
which concludes the proof of the martingale property of $(v(t\wedge\overline{\tau}))$.

Next, let us show that $\overline{\tau}$ is optimal for $w$. Using \eqref{eq:th:w_continuity:martingale1}, for any $t\geq 0$, we get 
\begin{equation}\label{eq:th:w_continuity_martingale}
w(x)=\E_x \left[ w(X_{t\wedge\overline{\tau}})e^{\int_0^{t\wedge\overline{\tau}}g(X_s)ds}\right].
\end{equation}
By Fatou Lemma, recalling that $0<c\leq g(\cdot)$ and $1\leq w(\cdot)$, we get $\overline{\tau}<\infty$, since
\[
\E_x e^{\overline{\tau} c} \leq \liminf_{t\to \infty} \E_x e^{(t \wedge \overline{\tau} ) c}\leq \liminf_{t\to \infty} \E_x \left[ w(X_{t\wedge\overline{\tau}})e^{\int_0^{t\wedge\overline{\tau}}g(X_s)ds}\right] = w(x).
\]
Letting $t\to\infty$ in~\eqref{eq:th:w_continuity_martingale}, by Fatou lemma and quasi-left continuity of $(X_t)$, we get
\[
w(x)\geq \E_x \left[ w(X_{\overline{\tau}})e^{\int_0^{\overline{\tau}}g(X_s)ds}\right].
\]
Consequently, by bounded convergence theorem, we get
\begin{equation}\label{eq:th:w_continuity:dynprog}
w(x)=\E_x \left[ w(X_{\overline{\tau}})e^{\int_0^{\overline{\tau}}g(X_s)ds}\right].
\end{equation}
Thus, to conclude the proof that $\overline{\tau}$ is optimal for $w$ it is sufficient to show that we can replace $w(X_{\overline{\tau}})$ by $e^{G(X_{\overline{\tau}})}$ in \eqref{eq:th:w_continuity:dynprog}.

Using right continuity of $(X_t)$ and recalling~\eqref{eq:prop:coninuity_w_T:optstop}, we get
\begin{equation}
\overline{w}_{T-\overline{\tau}_T}(X_{\overline{\tau}_T})=e^{G(X_{\overline{\tau}_T})}.
\end{equation}
By Dini's theorem, letting $T\to \infty$ and using quasi-left continuity of $(X_t)$, we get
\begin{equation}\label{eq:cl2}
w(X_{\overline{\tau}})=e^{G(X_{\overline{\tau}})}.
\end{equation}
Using~\eqref{eq:th:w_continuity:dynprog}, we get $w(x)=\E_x \left[ e^{\int_0^{\overline{\tau}}g(X_s)ds+G(X_{\overline{\tau}})}\right]$, i.e. $\overline{\tau}$ is the optimal stopping time for $w$.

Finally, from \eqref{eq:cl2} we get $\hat{\tau}\leq \overline{\tau}$, where $\hat{\tau}$ is defined by~\eqref{eq:th:w_continuity:optstop}. Moreover, noting that $\overline{w}_T\geq w$, we get $\overline{\tau}_T \leq \hat\tau$. Letting $T\to\infty$, we also have $\overline{\tau} \leq \hat\tau$. This implies $\overline{\tau}=\hat\tau$ and concludes the proof.
\end{proof}

\begin{remark}\label{rm:convention}
As an auxiliary result from the proof of Theorem~\ref{th:w_continuity} we get that 
\begin{equation}\label{eq:infimum_attained}
w(x)= \E_x\left[ e^{\int_0^{\hat{\tau}}g(X_s)ds+G(X_{\hat{\tau}})}\right],
\end{equation}
where $\hat\tau=\inf\left\{t\geq 0: w(X_t)=e^{G(X_t)}\right\}$ and the right hand side of~\eqref{eq:infimum_attained} may be understood as the standard expectation, i.e. without the convention~\eqref{eq:convention_stopping}. 
\end{remark}

\begin{remark}\label{rm:martingale}
Using similar technique as in the proof of Theorem~\ref{th:w_continuity} one can show that  the process $v(t)=e^{\int_0^{t} g(X_s) ds}w(X_{t}), t\geq 0$, is a submartingale; note that from~\eqref{eq:th:w_continuity:martingale1} it follows that $(v(t\wedge \hat\tau))$ is a martingale.
\end{remark}

\section{Approximation of optimal stopping problems}\label{S:approximation_stopping}
The main goal of this section is to show that~\eqref{eq8} could be approximated by
\begin{equation}\label{eq:opt_stop_simp_discrete}
w_m(x):=\inf_{\tau\in \mathcal{T}_m}\E_x \left[ e^{\int_0^\tau g_m(X_s)ds +G_m(X_\tau)}\right],\quad m\in \bN,
\end{equation}
where $(g_{m})$ and $(G_m)$ are fixed sequences of functions from $C(E)$. We assume that $g_m\uparrow g$ as $m\to \infty$ with $g_0(\cdot)\geq c_0> 0$ and $G_m\to G$ uniformly with $G_m(\cdot)\geq 0$. We show that the sequence given in~\eqref{eq:opt_stop_simp_discrete} converges uniformly on compact sets to~\eqref{eq8}, as $m\to\infty$.

Note that this provides a link between continuous and discrete time optimal stopping framework. Also, it might help to establish existence of solution to the continuous time Bellman equation for impulse control problem by using its discrete time approximations; see~\cite{JelPitSte2019b} for details.

\begin{theorem}\label{th:approximation_discrete}
Let $w$ and $w_m$ be given by~\eqref{eq8} and~\eqref{eq:opt_stop_simp_discrete}, respectively. Then, $w_m\to w$, as $m\to \infty$, uniformly on compact sets.
\end{theorem}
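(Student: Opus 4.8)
The plan is to prove the two one‑sided uniform estimates
\[
\limsup_{m\to\infty}\sup_{x\in\Gamma}\bigl(w_m(x)-w(x)\bigr)\le 0
\qquad\text{and}\qquad
\limsup_{m\to\infty}\sup_{x\in\Gamma}\bigl(w(x)-w_m(x)\bigr)\le 0
\]
on an arbitrary compact set $\Gamma\subset E$, and then combine them. Throughout set $\epsilon_m:=\Vert G_m-G\Vert$, so that $\epsilon_m\to 0$ and $G-\epsilon_m\le G_m\le G+\epsilon_m$; recall also $g_m\le g$ and $c_0\le g_0\le g_m$.

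For the upper bound I would feed the continuous‑time optimal policy into the dyadic problem. Fix $T\in\bN$ and let $\overline\tau_T\le T$ be the optimal stopping time for $\overline w_T$ from Proposition~\ref{prop:continuity_w_T}, so that $\overline w_T(x)=\E_x[e^{\int_0^{\overline\tau_T}g\,ds+G(X_{\overline\tau_T})}]$. Let $\overline\tau_T^m\in\cT_m$ be its upper dyadic approximation, with $\overline\tau_T\le\overline\tau_T^m\le\overline\tau_T+\delta_m\le T$. Since $\overline\tau_T^m$ is admissible for $w_m$ and $g_m\le g$, $G_m\le G+\epsilon_m$,
\[
w_m(x)\le \E_x[e^{\int_0^{\overline\tau_T^m} g_m\,ds+G_m(X_{\overline\tau_T^m})}]\le e^{\epsilon_m}\,\E_x[e^{\int_0^{\overline\tau_T^m} g\,ds+G(X_{\overline\tau_T^m})}].
\]
By the same algebra as in Step~2 of Proposition~\ref{prop:continuity_w_T} (now with mesh $\delta_m$), the bracket minus $\overline w_T(x)$ equals $\E_x[e^{\int_0^{\overline\tau_T^m}g\,ds+G(X_{\overline\tau_T^m})}-e^{\int_0^{\overline\tau_T}g\,ds+G(X_{\overline\tau_T})}]$ and is bounded by $e^{T\Vert g\Vert}\bigl(e^{\Vert G\Vert}(e^{\delta_m\Vert g\Vert}-1)+\E_x|e^{G(X_{\overline\tau_T^m})}-e^{G(X_{\overline\tau_T})}|\bigr)$, which tends to $0$ uniformly on $\Gamma$ by Lemma~\ref{lm:convergence_terminal}. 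Hence $\limsup_m\sup_\Gamma(w_m-\overline w_T)\le 0$ for each $T$, and since $\overline w_T\downarrow w$ with all functions continuous, Dini's theorem gives $\sup_\Gamma(\overline w_T-w)\to 0$ as $T\to\infty$; together these yield the upper bound.

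The lower bound is the delicate part, and the main obstacle is that one cannot simply evaluate a (near‑)optimal policy $\tau_m$ for $w_m$ using the larger cost $g$: along long paths the factor $e^{\int_0^{\tau_m}(g-g_m)\,ds}$ can blow up the expectation, because the only available a priori control is $\E_x[e^{c_0\tau_m}]\le e^{\Vert G\Vert}$, and $c_0$ may be strictly smaller than $\Vert g\Vert$. I would circumvent this by comparing \emph{downwards}. For $k\in\bN$ introduce the auxiliary continuous‑time value
\[
w^{(k)}(x):=\inf_{\tau\in\cT}\E_x[e^{\int_0^{\tau} g_k\,ds+G(X_\tau)}],\qquad x\in E.
\]
Since $g_k\in C(E)$ is bounded with $g_k\ge c_0>0$ and $G\in C(E)$ is bounded and non‑negative, the pair $(g_k,G)$ satisfies the standing assumptions, so Theorem~\ref{th:w_continuity} applies and $w^{(k)}$ is continuous and bounded. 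Now fix $k$; for every $m\ge k$ one has $g_m\ge g_k$ and $G_m\ge G-\epsilon_m$, so for any $\tau\in\cT_m\subset\cT$,
\[
\E_x[e^{\int_0^{\tau} g_m\,ds+G_m(X_\tau)}]\ge e^{-\epsilon_m}\,\E_x[e^{\int_0^{\tau} g_k\,ds+G(X_\tau)}]\ge e^{-\epsilon_m}w^{(k)}(x),
\]
(the inequality passing through the convention~\eqref{eq:convention_stopping} level by level), and taking the infimum over $\tau\in\cT_m$ gives $w_m(x)\ge e^{-\epsilon_m}w^{(k)}(x)$. Crucially, the troublesome cost $g$ never appears: the bound uses the smaller $g_k$ together with the infimum over all of $\cT$.

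It remains to show $w^{(k)}\uparrow w$ uniformly on $\Gamma$. Monotonicity in $k$ and $w^{(k)}\le w$ follow from $g_k\uparrow g$. To identify the limit I would test $w^{(k)}$ against the \emph{true} optimal policy $\hat\tau$ of $w$: by Remark~\ref{rm:convention} the quantity $w(x)=\E_x[e^{\int_0^{\hat\tau}g\,ds+G(X_{\hat\tau})}]$ is a genuine finite expectation, so $e^{\int_0^{\hat\tau}g_k\,ds+G(X_{\hat\tau})}\uparrow e^{\int_0^{\hat\tau}g\,ds+G(X_{\hat\tau})}$ with integrable limit, and the monotone convergence theorem gives
\[
\lim_{k\to\infty}w^{(k)}(x)\le \lim_{k\to\infty}\E_x[e^{\int_0^{\hat\tau}g_k\,ds+G(X_{\hat\tau})}]=w(x),
\]
whence $w^{(k)}(x)\uparrow w(x)$. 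As the $w^{(k)}$ and $w$ are continuous and the convergence is monotone, Dini's theorem upgrades this to uniform convergence on $\Gamma$. Finally, from $w_m\ge e^{-\epsilon_m}w^{(k)}$ for $m\ge k$,
\[
\sup_{\Gamma}(w-w_m)\le \sup_{\Gamma}\bigl(w-w^{(k)}\bigr)+(1-e^{-\epsilon_m})\sup_{\Gamma}w^{(k)};
\]
letting $m\to\infty$ and then $k\to\infty$ sends the right‑hand side to $0$, which gives the lower bound. Combining with the upper bound, $\sup_\Gamma|w_m-w|\to 0$, as claimed.
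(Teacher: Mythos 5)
Your overall architecture is sound and in fact close to the paper's: your auxiliary value $w^{(k)}$ is exactly the paper's $\underline v_k(x)=\inf_{\tau\in\cT}\E_x\bigl[e^{\int_0^\tau g_k(X_s)ds+G(X_\tau)}\bigr]$, your upper bound via $\overline w_T$ and upper dyadic approximation parallels the paper's Step 2 (which uses $\hat\tau$ directly), and your treatment of the $G_m\to G$ perturbation matches the paper's Step 3. However, there is a genuine gap at the single most delicate point: the claim $w^{(k)}\uparrow w$. Testing $w^{(k)}$ against the optimal policy $\hat\tau$ of $w$ and applying monotone convergence only yields $\lim_{k\to\infty}w^{(k)}(x)\le w(x)$, which is already immediate from $g_k\le g$; it says nothing about the direction you actually need, namely $\lim_{k\to\infty}w^{(k)}(x)\ge w(x)$. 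The phrase ``whence $w^{(k)}(x)\uparrow w(x)$'' is a non sequitur: a priori the increasing limit could stay strictly below $w$, because near-optimal policies for $w^{(k)}$ are not competitors for $w$ at comparable cost --- this is precisely the blow-up obstacle for $e^{\int_0^\tau (g-g_k)ds}$ that you yourself identified, and your detour relocates it rather than removing it (the only a priori control is $\E_x[e^{c_0\tau}]\le e^{\Vert G\Vert}$, and $g_k\to g$ need not be uniform on $E$).

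The missing ingredient is the paper's Step 1 argument. By Theorem~\ref{th:w_continuity} applied with the pair $(g_k,G)$, each $w^{(k)}$ is attained (as a genuine expectation, cf.\ Remark~\ref{rm:convention}) at $\underline\tau_k:=\inf\{t\ge 0:\ w^{(k)}(X_t)\ge e^{G(X_t)}\}$; since $w^{(k)}\le w^{(k+1)}\le e^{G}$, the stopping regions increase in $k$, so $\underline\tau_{k+1}\le\underline\tau_k$ and $\underline\tau:=\lim_{k\to\infty}\underline\tau_k$ exists. Fatou's lemma, right continuity of $(X_t)$, and $g_k\uparrow g$ then give
\[
\lim_{k\to\infty}w^{(k)}(x)=\lim_{k\to\infty}\E_x\left[e^{\int_0^{\underline\tau_k}g_k(X_s)ds+G(X_{\underline\tau_k})}\right]\ \ge\ \E_x\left[e^{\int_0^{\underline\tau}g(X_s)ds+G(X_{\underline\tau})}\right]\ \ge\ w(x),
\]
the last inequality after reconciling with convention~\eqref{eq:convention_stopping} via bounded convergence. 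With this inserted, the remainder of your proof (Dini's theorem, the inequality $w_m\ge e^{-\epsilon_m}w^{(k)}$ for $m\ge k$, and the two-parameter passage to the limit) goes through and yields the stated uniform convergence on compact sets.
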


\begin{proof}
Noting that 
\[
\vert w(x) -w_m(x)\vert \leq  \vert w(x) -v_m(x)\vert+ \vert v_m(x)-w_m(x)\vert,
\]
where, $x\in E$, $m\in\bN$, and the function $v_m\colon E\to \bR$ is given by
\[
v_m(x)  :=\inf_{\tau\in \mathcal{T}_m}\E_x\left[ e^{\int_0^\tau g_m(X_s)ds +G(X_\tau)}\right],
\]
it is sufficient to show that $\vert w(x) -v_m(x)\vert$ and $\vert v_m(x)-w_m(x)\vert$ converge to zero uniformly on compact sets. Moreover, noting that $\mathcal{T}_m\subset \mathcal{T}$ and $g_m\uparrow g$, we get 
\[
\underline v_m(x) \leq v_m(x) \leq \overline v_m(x),
\]
where the lower and upper bounds of $v_m$ are given by
\[
\underline v_m(x)  :=\inf_{\tau\in \mathcal{T}}\E_x \left[e^{\int_0^\tau g_m(X_s)ds+G(X_\tau)}\right],\quad\quad \overline v_m(x)  :=\inf_{\tau\in \mathcal{T}_m}\E_x \left[e^{\int_0^\tau g(X_s)ds+G(X_\tau)}\right].
\]
Hence, for the convergence of $v_m$ to $w$ it is sufficient to show that $\underline v_m$ and $\overline v_m$ both converge to $w$ uniformly on compact sets. For transparency, we split the rest of the proof into three steps: (1) proof of $|\underline v_m -w|\to 0$; (2) proof of $|\overline v_m-w|\to 0$; (3) proof of $|v_m-w_m|\to 0$.

\medskip
\noindent {\it Step 1.} We show that $\vert {\underline v_m}-w\vert\to 0$ uniformly on compact sets. For any $x\in E$, we get $w(x)\geq {\underline v_m}(x)$ and ${\underline v}_{m+1}(x)\geq {\underline v_m}(x)$. Thus, the limit ${\underline v}(x):=\lim_{m \to\infty}{\underline v_m}(x)$ is well defined, and 
\begin{equation}
w(x)\geq {\underline v}(x), \quad x\in E.
\end{equation}
Using Theorem~\ref{th:w_continuity}, we get that
\[
\underline\tau_m:=\inf\left\{t\geq 0: {\underline v_m}(X_t) \geq e^{G(X_t)}\right\} 
\]
is the optimal stopping time for ${\underline v_m}$. Since $\underline\tau_{m+1} \leq \underline\tau_{m}$, the limit $\underline\tau:=\lim_{m\to\infty}\underline \tau_m$ is well defined. Then, using Remark \ref{rm:convention}, Fatou Lemma, and right continuity of $(X_t)$, we get
\begin{align*}
w(x) & \geq \lim_{m\to \infty} {\underline v_m}(x) = \lim_{m\to \infty} \E_x  \left[e^{\int_0^{\underline\tau_m} g_m(X_s)ds+G(X_{\underline\tau_m})}\right]\\
& \geq \E_x \left[ \liminf_{m\to \infty} e^{\int_0^{\underline\tau_m} g_m(X_s)+G(X_{\underline\tau_m})}\right]=\E_x \left[ e^{\int_0^{\underline\tau} g(X_s)ds +G(X_{\underline\tau})}\right]\\
& = \liminf_{T\to\infty} \E_x \left[ e^{\int_0^{\underline\tau\wedge T} g(X_s)ds+G(X_{\underline\tau\wedge T})}\right]\geq w(x),
\end{align*}
where, in the last line, we used bounded convergence theorem. Therefore, we get $w(x)= {\underline v}(x)$, $x\in E$. 

Recalling Theorem~\ref{th:w_continuity} we get that for any $m\in \bN$ functions $\underline{v}_m$ and $w$ are continuous. Using the fact that convergence ${\underline v_m}(x) \to w(x)$ is monotone, by Dini's theorem we conclude that ${\overline v_m} \to w$ uniformly on compact sets.

\medskip
\noindent {\it Step 2.} We show that $\vert{\overline v_m}-w \vert\to 0$ uniformly on compact sets. Clearly, for any $x\in E$, we get $w(x)\leq {\overline v_m}(x)$ and ${\overline v}_{m+1}(x)\leq {\overline v_m}(x)$. Thus, the limit ${\overline v}(x):=\lim_{m \to\infty}{\overline v_m}(x)$ is well defined, and $w(x)\leq {\overline v}(x)$, $ x\in E$.

Now, we show that ${\overline v}(x)\leq w(x)$, $x\in E$. Let $\hat\tau$ denote the optimal stopping time for $w$ given in \eqref{eq:th:w_continuity:optstop}, and let $\hat\tau_m$ denote its ${\cal T}_m$ approximation given by
\[
\hat \tau_m:=\inf\{\tau\in \cT_m \colon \tau\geq \hat\tau\}= \textstyle \sum_{j=1}^{\infty}1_{\left\{\frac{j-1}{2^m}< \hat\tau\leq \frac{j}{2^m}\right\}}\frac{j}{2^m}.
\]
Then, $\hat\tau_m\downarrow\hat\tau$, $m\to\infty$. From Remark~\ref{rm:convention}, we get $\bE_x\left[e^{\int_0^{\hat\tau} g(X_s)ds +G(X_{\hat\tau})}\right]=w(x)< \infty.$ Since $g(\cdot)\geq 0$ and $0\leq G(\cdot)\leq \Vert G \Vert$, for any $T\geq 0$ and $m\in \bN$, we get
\[
e^{\int_0^{\hat\tau_m\wedge T} g(X_s)ds +G(X_{\hat\tau_m\wedge T})} \leq e^{\int_0^{\hat\tau+1} g(X_s)ds+\Vert G \Vert} \leq  e^{\Vert g\Vert+\Vert G \Vert} e^{\int_0^{\hat\tau} g(X_s)ds +G(X_{\hat\tau})}.
\]
Consequently, using bounded convergence theorem and quasi-left continuity of $(X_t)$, for any $m\in \mathbb{N}$, we get
\[
\lim_{T\to\infty} \E_x \left[ e^{\int_0^{\hat\tau_m\wedge T} g(X_s)ds +G(X_{\hat\tau_m\wedge T})}\right]= \E_x \left[\lim_{T\to\infty}e^{\int_0^{\hat\tau_m\wedge T} g(X_s)ds +G(X_{\hat\tau_m\wedge T})}\right].
\]
Consequently, we get
\begin{equation}\label{eq:conv.cl1}
 {\overline v_m}(x) \leq \E_x\left[ e^{\int_0^{\hat\tau_m} g(X_s)ds +G(X_{\hat\tau_m})}\right],
\end{equation}
where the right hand side of \eqref{eq:conv.cl1} is understood as the standard expectation, i.e. without convention~\eqref{eq:convention_stopping}. Next, using bounded convergence theorem, the fact that $\hat\tau_m\downarrow\hat\tau$, and right continuity of $(X_t)$, we get
\begin{align*}
{\overline v}(x) & = \lim_{m\to\infty} {\overline v_m}(x) \leq \lim_{m\to \infty} \bE_x \left[e^{\int_0^{\hat\tau_m} g(X_s)ds +G(X_{\hat\tau_m})}\right] \\
& = \bE_x \left[\lim_{m\to \infty} e^{\int_0^{\hat\tau_m} g(X_s)ds +G(X_{\hat\tau_m})}\right] = \bE_x \left[ e^{\int_0^{\hat\tau} g(X_s)ds +G(X_{\hat\tau})} \right]= w(x),
\end{align*}
which concludes the proof of ${\overline v}(x)\leq w(x)$, for $x\in E$.

Finally, repeating argument leading to Theorem~\ref{cor1} one can see that ${\overline v_m}$ is continuous for any $m\in \mathbb{N}$. Using Theorem~\ref{th:w_continuity}, we get that the function $w$ is continuous. As ${\overline v}\equiv w$, recalling that the convergence ${\overline v_m}(x) \to w(x)$ is monotone, and using Dini's theorem, we conclude that ${\overline v_m} \to w$ uniformly on compact sets.

\medskip
\noindent {\it Step 3.} We show that $\vert v_m-w_m\vert\to 0$ uniformly. For any $m\in\bN$ and $x\in E$, let $a_m(x):=\max(w_m(x),v_m(x))$.

First, let us assume that $x\in E$ is such that $w_m(x)\leq v_m(x)$. Let $\tau_m$ be the optimal stopping time for $w_m$. Then, using bounded convergence theorem, for $m\in\bN$, we get
\[
v_m(x)\leq \liminf_{T\to\infty}\E_x \left[ e^{\int_0^{\tau_m\wedge T} g_m(X_s)ds+G(X_{\tau_m\wedge T})}\right]=\E_x \left[e^{\int_0^{\tau_m} g_m(X_s)ds+G(X_{\tau_m})}\right].
\]
Consequently, recalling that $G_m(\cdot)\geq 0$, we get
\begin{align}
0  \leq  v_m(x)-w_m(x) & \leq\E_x \left[e^{\int_0^{\tau_m} g_m(X_s)ds} \left(e^{G(X_{\tau_m})}-e^{G_m(X_{\tau_m})}\right)\right] \nonumber\\
& \leq  \Vert e^G - e^{G_m}\Vert\E_x \left[ e^{\int_0^{\tau_m} g_m(X_s)ds+G_m(X_{\tau_m})}\right]\nonumber\\
&=  w_m(x) \Vert e^G - e^{G_m}\Vert\nonumber\\
& \leq a_m(x) \Vert e^G - e^{G_m}\Vert \label{eq:cl3}.
\end{align}
Second, let us assume that $x\in E$ is such that $w_m(x)\geq v_m(x)$. Then, as in the previous case, we get
\begin{equation}\label{eq:cl4}
0 \leq  w_m(x)-v_m(x)\leq a_m(x) \Vert e^G - e^{G_m}\Vert.
\end{equation}
Combining \eqref{eq:cl3} and \eqref{eq:cl4}, for any $x\in E$ and $m\in \bN$, we get
\[
\vert v_m(x)-w_m(x) \vert \leq a_m(x) \Vert e^G - e^{G_m}\Vert.
\]
Since $G_m\to G$ uniformly, for sufficiently large $m$, we get
\[
\Vert a_m\Vert\leq \max (\Vert w_m \Vert, \Vert v_m \Vert) \leq  \max (e^{\Vert G_m\Vert},e^{\Vert G\Vert}) \leq e^{\Vert G\Vert+1}.
\]
Combining this with inequality $|e^z-e^y|\leq e^{\max(z,y)}|z-y|$, for $z,y\in \bR$, we see that, for $m$ sufficiently large, we get
\[
\vert v_m(x)-w_m(x) \vert \leq e^{2\Vert G\Vert +2} \Vert G_m - G\Vert\to 0,\quad x\in E.
\]
Combining Step 1, Step 2, and Step 3, we conclude the proof.
\end{proof}

\bibliographystyle{siamplain}

\begin{thebibliography}{10}

\bibitem{BasSte2018}
{\sc A.~Basu and {\L}.~Stettner}, {\em Zero-sum {M}arkov games with impulse
  controls}.
\newblock submitted, 2018.

\bibitem{BauPop2018}
{\sc N.~B{\"a}uerle and A.~Popp}, {\em Risk-sensitive stopping problems for
  continuous-time {M}arkov chains}, Stochastics, 90 (2018), pp.~411--431.

\bibitem{BauRie2017b}
{\sc N.~B{\"a}uerle and U.~Rieder}, {\em Partially observable risk-sensitive
  stopping problems in discrete time}, in Modern Trends in Controlled
  Stochastic Processes II, A.~Piunovskiy, ed., Luniver Press, 2015, pp.~12--31.

\bibitem{BenLio1984}
{\sc A.~Bensoussan and J.-L. Lions}, {\em Impulse Control And Quasi-Variational
  Inequalities}, Gauthier-Villars, Montrouge, 1984.

\bibitem{Fak1970}
{\sc A.~Fakeev}, {\em Optimal stopping rules for stochastic processes with
  continuous parameter}, Theory of Probability \& Its Applications, 15 (1970),
  pp.~324--331, \url{https://doi.org/10.1137/1115039}.

\bibitem{Fak1971}
{\sc A.~Fakeev}, {\em Optimal stopping of a {M}arkov process}, Theory of
  Probability \& Its Applications, 16 (1971), pp.~694--696,
  \url{https://doi.org/10.1137/1116076}.

\bibitem{GikSko2004}
{\sc I.~Gikhman and A.~Skorokhod}, {\em The Theory of Stochastic Processes II},
  Springer, 1975.

\bibitem{HdiKar2011}
{\sc I.~Hdhiri and M.~Karouf}, {\em Risk sensitive impulse control of
  non-markovian processes}, Mathematical Methods of Operations Research, 74
  (2011), pp.~1--20.

\bibitem{JelPitSte2019b}
{\sc D.~Jelito, M.~Pitera, and {\L}.~Stettner}, {\em Long-run risk sensitive
  impulse control}.
\newblock Preprint, 2019.

\bibitem{MenRob2017}
{\sc J.~Menaldi and M.~Robin}, {\em On some impulse control problems with
  constraint}, SIAM Journal on Control and Optimization, 55 (2017),
  pp.~3204--3225, \url{https://doi.org/10.1137/16M1090302}.

\bibitem{Nag2007b}
{\sc H.~Nagai}, {\em Stopping problems of certain multiplicative functionals
  and optimal investment with transaction costs}, Applied Mathematics and
  Optimization, 55 (2007), pp.~359--384.

\bibitem{PalSte2010}
{\sc J.~Palczewski and {\L}.~Stettner}, {\em Finite horizon optimal stopping of
  time-discontinuous functionals with applications to impulse control with
  delay}, SIAM Journal on Control and Optimization, 48 (2010), pp.~4874--4909,
  \url{https://doi.org/10.1137/080737848}.

\bibitem{PalSte2017}
{\sc J.~Palczewski and {\L}.~Stettner}, {\em Impulse control maximizing average
  cost per unit time: A nonuniformly ergodic case}, SIAM Journal on Control and
  Optimization, 55 (2017), pp.~936--960.

\bibitem{PesShi2006}
{\sc G.~Peskir and A.~Shiryaev}, {\em {O}ptimal {S}topping and
  {F}ree-{B}oundary {P}roblems}, Springer, 2006.

\bibitem{PitSte2019}
{\sc M.~Pitera and {\L}.~Stettner}, {\em Long-run risk sensitive dyadic impulse
  control}, Applied Mathematics {\&} Optimization,  (2019),
  \url{https://doi.org/10.1007/s00245-019-09631-9}.

\bibitem{Rob1981}
{\sc M.~Robin}, {\em On some impulse control problems with long run average
  cost}, SIAM Journal on Control and Optimization, 19 (1981), pp.~333--358.

\bibitem{SadSte2002}
{\sc R.~Sadowy and {\L}.~Stettner}, {\em On risk-sensitive ergodic impulsive
  control of {M}arkov processes}, Applied Mathematics and Optimization, 45
  (2002), pp.~45--61.

\bibitem{Shi1978}
{\sc A.~Shiryaev}, {\em Optimal Stopping Rules}, Springer, 1978.

\bibitem{Shi2019}
{\sc A.~Shiryaev}, {\em Stochastic Disorder Problems}, Springer International
  Publishing, 2019.

\bibitem{Whi1990}
{\sc P.~Whittle}, {\em Risk-sensitive optimal control}, Wiley New York, 1990.

\bibitem{Zab1984}
{\sc J.~Zabczyk}, {\em Stopping problems in stochastic control}, in
  {P}roceedings of the {I}nternational {C}ongress of {M}athematicians, vol.~2,
  PWN, Warsaw, North-Holland, Amsterdam, 1984, pp.~1425--1437.

\end{thebibliography}

\end{document}